\def\today{\number\day\space\ifcase\month\or   January\or February\or
   March\or April\or May\or June\or   July\or August\or September\or
   October\or November\or December\fi\   \number\year}
\theoremstyle{definition}
\newtheorem{lma}{Lemma}[section]
\newaliascnt{thmCt}{lma}
\newtheorem{thm}[thmCt]{Theorem}
\newaliascnt{corCt}{lma}
\newaliascnt{propCt}{lma}
\newtheorem{prop}[propCt]{Proposition}
\newtheorem*{thm*}{Theorem}
\newtheorem*{qst*}{Question}
\newtheorem*{cor*}{Corollary}
\newtheorem*{prop*}{Proposition}
\newtheorem{thmx}{Theorem}
\newcounter{theoremintro}
\newaliascnt{pgrCt}{lma}
\newaliascnt{dfCt}{lma}
\newtheorem{df}[dfCt]{Definition}
\newaliascnt{remCt}{lma}
\newtheorem{rem}[remCt]{Remark}
\newaliascnt{remsCt}{lma}
\newaliascnt{egCt}{lma}
\newaliascnt{egsCt}{lma}
\newaliascnt{qstCt}{lma}
\newaliascnt{pbmCt}{lma}
\newaliascnt{notaCt}{lma}
\newtheorem{nota}[notaCt]{Notation}
\newcommand{\beq}{\begin{equation}}
\newcommand{\eeq}{\end{equation}}
\newcommand{\beqa}{\begin{eqnarray*}}
\newcommand{\eeqa}{\end{eqnarray*}}
\newcommand{\bal}{\begin{align*}}
\newcommand{\eal}{\end{align*}}
\newcommand{\bi}{\begin{itemize}}
\newcommand{\ei}{\end{itemize}}
\newcommand{\be}{\begin{enumerate}}
\newcommand{\ee}{\end{enumerate}}
\newcommand{\ep}{\varepsilon}
\newcommand{\N}{{\mathbb{N}}}
\newcommand{\U}{{\mathcal{U}}}
\newcommand{\D}{{\mathcal{D}}}
\newcommand{\id}{{\mathrm{id}}}
\newcommand{\Prim}{{\mathrm{Prim}}}
\newcommand{\Aut}{{\mathrm{Aut}}}
\newcommand{\Ad}{{\mathrm{Ad}}}
\newcommand{\ca}{$C^*$-algebra}
\newcommand{\I}{\infty}
\date{\today}
\title[]{Equivariant bundles and absorption}
\thanks{
The first named author was supported by GA\v{C}R project 19-05271Y, RVO:67985840, and by a grant from IPM.
The second named author was partially supported
by the Deutsche Forschungsgemeinschaft through an \emph{eigene Stelle}
and under Germany's Excellence Strategy ``Mathematics 
M\"unster: Dynamics--Geometry--Structure'', and also by a Postdoctoral Research Fellowship
from the Humboldt Foundation.} 
\author{Marzieh Forough}
\address{Marzieh Forough
Institute of Mathematics, Czech Academy of Sciences
115 67
Praha 1, Czech republic}
\email{forough@math.cas.cz}
\urladdr{https://users.math.cas.cz/~forough/}
\author[Eusebio Gardella]{Eusebio Gardella}
\address{Eusebio Gardella
Mathematisches Institut, Fachbereich Mathematik und Informatik der
Universit\"at M\"unster, Einsteinstrasse 62, 48149 M\"unster, Germany.}
\email{gardella@uni-muenster.de}
\urladdr{www.math.uni-muenster.de/u/gardella/}
\begin{document}

\begin{abstract}
For a locally compact group $G$ and a strongly 
self-absorbing $G$-algebra $(\mathcal{D},\delta)$, we
obtain a new characterization of absorption of a 
strongly self-absorbing action using almost equivariant completely positive maps into the underlying algebra. 
The main technical tool to obtain this characterization is the existence of almost equivariant lifts for equivariant completely positive maps, proved in recent work of the authors with Thomsen. 

This characterization is then used to show 
that an equivariant $C_0(X)$-algebra with 
$\mathrm{dim}_{\mathrm{cov}}(X)<\infty$ is $(\mathcal{D},\delta)$-stable
if and only if all of its fibers are, extending a result
of Hirshberg, R\o rdam and Winter to the equivariant setting. The condition on
the dimension of $X$ is known to be necessary, and we 
show that it can be removed if, for example, the bundle is 
locally trivial. 
\end{abstract}

\maketitle

\renewcommand*{\thetheoremintro}{\Alph{theoremintro}}
\section{Introduction}

The study of 
$C_0(X)$-algebras has attracted a great deal of attention,
as their global structure is closely related to
that of the fibers. Indeed, there are many
results which state that, under varying sets of assumptions, a $C_0(X)$-algebra satisfies a given property
whenever all the fibers do; for example, see~\cite{Blaroro_properly_2004, Dadar_kk-fiberwise_2009, GarHirSan_rokhlin_2019}, etc. (The converse is usually also
true and often easier to prove.) 
This work is motivated by a preservation result by
Hirshberg, R\o rdam and Winter \cite{HirRorWin_algebras_2007}, stating that for a 
given strongly self-absorbing \ca\ $\D$, 
a $C_0(X)$-algebra with $\dim_{\mathrm{cov}}(X)<\I$ 
absorbs $\D$ if and only if each fiber does.

We work in the equivariant setting, and for a locally
compact group $G$ we consider $G$-$C_0(X)$-algebras,
that is, $C_0(X)$-algebras endowed with a fiber-wise 
action of $G$. For example, whenever $A$ is a 
$G$-algebra for which $\Prim(A)/G$ is 
Hausdorff, then $A$ can be naturally described in these
terms. Equivariant bundles arise naturally in practice,
even if one is only interested in actions on \emph{simple}
C*-algebras. For example, if a compact group $G$ acts on
a unital \ca\ $A$ with finite Rokhlin dimension
with commuting towers (see \cite{Gar_rokhlin_2015}),
then the continuous part of $A_\I\cap A'$ is a
$G$-$C(X)$-algebra for a suitable compact space $X$. 
A description of the space $X$ in
this context is in general complicated; see, for 
example, the comments after 
Definition~2.2 in~\cite{GarHirSan_rokhlin_2019}.

Recall (see \cite{Sza_stronglyI_2018,Sza_stronglyII_2018}) that a separable, unital 
$G$-algebra $(\D,\delta)$ is said to be \emph{strongly self-absorbing} if the first factor embedding $\D \to \D \otimes \D$ is approximately $G$-unitarily equivalent to an equivariant isomorphism. This notion was introduced by 
Szabo in order to systematize the study of absorption
properties in the dynamical setting, as well as
laying the foundations for the study of internal structures
of dynamical systems; see for example \cite{Sza_rokhlin_2019}.

The main result of this work deals with
equivariant $C_0(X)$-algebras whose fibers absorb a fixed strongly 
self-absorbing $G$-algebra. Dynamical systems of this form appear naturally
when studying the structure of actions on (possibly 
simple) $\mathcal{Z}$-stable \ca s; see
\cite{GarLup_applications_2019, GarHirSan_rokhlin_2019, Sza_rokhlin_2019}. 
Particular cases of the following theorem 
have been proved in these works 
using ad-hoc methods, and our result
gives the desired outcome in greater generality. 

\begin{thmx}\label{thmx:FiberstoBundle}
Let $G$ be a second countable, locally compact group, 
let $X$ be a locally compact Hausdorff space, let 
$(A, \alpha)$ be a separable, unital $G$-$C_0(X)$-algebra, and 
$(\D,\delta)$ be a unitarily regular 
strongly self-absorbing $G$-algebra.
If $X$ is finite-dimensional, then
$(A, \alpha)$ is $(\D, \delta)$-stable if and only if 
all of its fibers are $(\D, \delta)$-stable.
The condition on $\dim_{\mathrm{cov}}(X)$ is necessary,
and can 
be removed if the bundle is locally $(\D,\delta)$-stable (in particular, if the bundle is locally trivial). 
\end{thmx}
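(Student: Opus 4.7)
The plan is to prove the nontrivial implication --- fiberwise $(\D,\delta)$-stability implies global $(\D,\delta)$-stability when $\dim_{\mathrm{cov}}(X) < \infty$ --- by a finite partition-of-unity argument that combines the two tools announced in the abstract: the new characterization of $(\D,\delta)$-stability via almost equivariant c.p.\ maps into $A$ itself, and the almost equivariant lifting theorem from the authors' joint work with Thomsen. The converse direction is routine, since each fiber map $A \twoheadrightarrow A_x$ is an equivariant quotient of $C_0(X)$-algebras and $(\D,\delta)$-stability is preserved under equivariant quotients.

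By the characterization, to establish $(\D,\delta)$-stability of $(A,\af)$ it suffices to produce, for every finite $F \subset A$, compact $K \subset G$, finite $S \subset \D$ and $\ep > 0$, a c.p.c.\ map $\varphi \colon \D \to A$ which is $(\ep,K)$-almost equivariant, $\ep$-multiplicative on $S$, approximately unital, and satisfies $\|[\varphi(d),a]\| < \ep$ for $d \in S$ and $a \in F$. For each $x \in X$, fiberwise stability yields such a map $\psi_x \colon \D \to A_x$ against the image of $F$ in $A_x$, and the almost equivariant lifting theorem produces a c.p.c.\ lift $\tilde\psi_x \colon \D \to A$ that remains $(\ep,K)$-almost equivariant. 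Since the norm in a $C_0(X)$-algebra is upper semicontinuous in $x$, the estimates $\|[\tilde\psi_x(d),a]\| < \ep$ and the approximate multiplicativity hold after cutting down by any $f \in C_0(X)_+$ supported in a sufficiently small neighborhood $U_x$ of $x$.

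Use $\dim_{\mathrm{cov}}(X) = n < \infty$ together with second countability (inherited from separability of $A$) to refine $(U_x)$ to a locally finite open cover indexed as $(U_{i,j})$ with $i \in \{0,\ldots,n\}$ and $U_{i,j} \cap U_{i,j'} = \varnothing$ for $j \neq j'$. Let $(h_{i,j}) \subset C_0(X)$ be a subordinate partition of unity. For each color $i$ define
\[\Phi_i(d) \;=\; \sum_j h_{i,j}^{1/2}\, \tilde\psi_{x_{i,j}}(d)\, h_{i,j}^{1/2},\]
a well-defined c.p.\ map thanks to disjointness of supports. Because $C_0(X)$ sits inside the $G$-invariant center of the multipliers of $A$, the $h_{i,j}^{1/2}$ commute with everything and are $G$-fixed; hence $\Phi_i$ is still $(\ep,K)$-almost equivariant, and standard c.p.\ calculations show that $\Phi_i$ is $O(\ep)$-multiplicative on $S$ and $O(\ep)$-central on $F$. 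Finally, setting $\varphi := \sum_{i=0}^{n} \Phi_i$, the identity $\sum_{i,j} h_{i,j} = 1$ gives $\varphi(1) \approx 1$, and since $n+1$ is uniform in $\ep$ the accumulated errors remain controlled. This $\varphi$ verifies the characterization.

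The main obstacle is maintaining approximate $G$-equivariance through the patching: strict equivariant lifts of the $\psi_x$ need not exist when $G$ is not discrete, which is precisely what forces us to work with lifts that are only almost equivariant. What rescues the argument is that the partition-of-unity elements $h_{i,j}$ come from $C_0(X)$ and are therefore automatically $G$-fixed and central, so combining the $\tilde\psi_x$ with them does not inflate the equivariance defect. For the last sentence of the theorem, if the bundle is locally $(\D,\delta)$-stable (in particular, locally trivial), the characterization applied to each $A|_U$ produces c.p.\ maps valued in $A|_U$ directly, bypassing the fiber-to-bundle lifting step; one then patches with an arbitrary locally finite partition of unity, with no dimension bound required. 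The necessity of $\dim_{\mathrm{cov}}(X) < \infty$ in general is inherited from the non-equivariant counterexamples of \cite{HirRorWin_algebras_2007} by taking $G$ trivial.
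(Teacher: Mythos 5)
There is a genuine gap at the heart of your argument: the patching step. Your map $\varphi=\sum_{i}\Phi_i$ with $\Phi_i(d)=\sum_j h_{i,j}^{1/2}\tilde\psi_{x_{i,j}}(d)h_{i,j}^{1/2}$ handles conditions (U), (C) and (E) correctly (the $h_{i,j}$ are central and $G$-fixed, so they do not inflate the centrality or equivariance defects), but it does \emph{not} yield approximate multiplicativity. Expanding $\varphi(d)\varphi(d')$ produces cross terms $h_{i,j}h_{i',j'}\,\tilde\psi_{x_{i,j}}(d)\,\tilde\psi_{x_{i',j'}}(d')$ supported on the overlaps $U_{i,j}\cap U_{i',j'}$ with $i\neq i'$, and the two local maps involved are unrelated almost-multiplicative maps (generically differing by conjugation by a unitary far from $1$ in each fiber); there is no reason for these cross terms to approximate $h_{i,j}h_{i',j'}\tilde\psi(dd')$ for any choice of $\tilde\psi$. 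Already for two overlapping sets with $h_1+h_2=1$, at a point where $h_1=h_2=1/2$ one compares $\tfrac14(\psi_1(d)+\psi_2(d))(\psi_1(d')+\psi_2(d'))$ with $\tfrac12(\psi_1(dd')+\psi_2(dd'))$, and the mixed products $\psi_1(d)\psi_2(d')$ are uncontrolled. This is exactly the obstruction that forces the elaborate gluing machinery in \cite{HirRorWin_algebras_2007} and in the present paper, and it is telling that your argument never invokes the unitary regularity hypothesis, which is precisely what the correct gluing consumes.

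The paper's route is: reduce to $X$ compact, embed $X$ in $[0,1]^m$ (Theorem~V.3 of \cite{HurWil_dimension_1941}), induct on $m$ to reduce to $X=[0,1]$, cover $[0,1]$ by finitely many intervals carrying regular maps (\autoref{key1}), and then glue two maps at a time over a single point (\autoref{key2} and \autoref{key3}). The gluing works because near the interface each $\psi_j$ is replaced by $d\mapsto\mu_j\bigl(v_j(x)(d\otimes1)v_j(x)^*\bigr)$, where $v_j$ runs along a continuous, almost $G$-invariant unitary path in $\D\otimes\D$ from $1$ to a unitary $u_1$ approximately carrying $d\otimes 1$ to $1\otimes d$; at the interface both maps become approximately the \emph{common} map $d\mapsto\mu(d)$, so the dangerous cross terms disappear. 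The existence of that path is Lemma~3.10 of \cite{Sza_stronglyII_2018} and requires both strong self-absorption and unitary regularity of $(\D,\delta)$. Your treatment of the locally $(\D,\delta)$-stable case has the same defect: the paper handles it via iterated equivariant pullbacks and Theorem~5.9 of \cite{Sza_stronglyII_2018}, not by a partition-of-unity sum. The surrounding scaffolding of your proposal (reduction via \autoref{thm:CharactDabs}, fiberwise maps, almost equivariant lifts, upper semicontinuity of the fiber norms, colored covers from finite covering dimension) matches the paper's strategy, but without a mechanism to reconcile neighboring local maps on overlaps the proof does not go through.
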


Unitary regularity (see \autoref{df:UnitReg}) 
is a very mild condition which in practice serves as an equivariant 
analog of $K_1$-injectivity. Unlike $K_1$-injectivity,
however, it is not known whether unitary regularity
is automatic for strongly self-absorbing actions, 
although this 
has been confirmed when $G$ is amenable; see \cite{GarHir_strongly_2018}.
Moreover, 
since any $(\mathcal{Z},\id_{\mathcal{Z}})$-stable $G$-algebra is automatically
unitarily regular, Theorem~\ref{thmx:FiberstoBundle} is applicable to a wide
family of actions. 

The proof of Theorem~\ref{thmx:FiberstoBundle} roughly
follows the arguments in \cite{HirRorWin_algebras_2007},
adapted to our setting.
Since we work with $G$-algebras, we have to keep track
of equivariance conditions at all steps of the construction. A second and technically
much more challenging task is that of keeping
track of continuity conditions for the action. Indeed, 
whenever one wishes to apply a reindexation argument in the 
(central) sequence algebra, or a diagonal argument, the
topology of the acting group may lead to problems. 
This is because the most direct ways of lifting relations in
$A_{\I}$ or $F(A)$ will only keep track of 
finitely many elements of the group, which is 
in general not enough for most purposes.

The starting point of our proof for Theorem~\ref{thmx:FiberstoBundle} is a new 
characterization of $(\D,\delta)$-absorption in a local
manner. More 
explicitly, our characterization is stated in terms
of almost equivariant completely positive contractive 
maps into the coefficient algebra; see \autoref{thm:CharactDabs}. We reproduce part of the 
statement in the unital case:

\begin{thmx}\label{thm-local-intro}
Let $G$ be a second countable, locally compact group, let 
$(A,\alpha)$ be a separable unital $G$-algebra, and let $(\D,\delta)$
be a strongly self-absorbing $G$-algebra. 
Then the following are equivalent:
\begin{itemize}
\item[(a)] $(A, \alpha)$ is $(\D, \delta)$-stable.
\item[(b)] Given finite subsets $F_\D\subseteq \D$ and
$F_A\subseteq A$, a compact 
subset $K_G\subseteq G$, and $\ep>0$, 
there is a completely positive contractive map 
$\psi \colon \D\to A$ such that
\begin{itemize}
\item[(U)] $\|\psi(1)-1\| < \ep$;
\item[(C)] $\|a\psi(d)-\psi(d)a\| < \ep$;
\item[(M)] $\|\psi(dd')-\psi(d)\psi(d')\| < \ep$;
\item[(E)] $\|\alpha_g(\psi(d))-\psi(\delta_g(d))\|<\ep$;
\end{itemize}
for all $a \in F_A$ and $d, d' \in F_\D$ and all $g\in K_G$.
\item[(c)] Given finite subsets $F_\D\subseteq \D$ and
$F_A\subseteq A$, a compact subset $K_G\subseteq G$, and $\ep>0$, 
there are unital $\ast$-homomorphisms
$\mu \colon \D\to A$ and $\sigma \colon A\to A$ with commuting 
ranges that satisfy the following 
for $a\in F_A$, $d\in F_\D$, and $g\in K_G$:
\bi
\item[\ (I) \ ] $\|\sigma(a)-a\|<\ep$;
\item[\ (E)$_\mu$] $\|\alpha_g(\mu(d))-\mu(\delta_g(d))\|<\ep$;
\item[\ (E)$_\sigma$] $\|\alpha_g(\sigma(a))-\sigma(\alpha_g(a))\|<\ep$;
\ei
\end{itemize} 
\end{thmx}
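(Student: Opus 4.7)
The plan is to prove the circular implications (a)$\Rightarrow$(c)$\Rightarrow$(b)$\Rightarrow$(a): the first is a direct construction from the defining isomorphism, the second an easy verification, and the third a sequence-algebra argument combined with Szab\'o's characterization of $(\D,\delta)$-stability through unital equivariant $\ast$-homomorphisms into the equivariant central sequence algebra $F_\infty(A,\alpha)$.

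For (a)$\Rightarrow$(c), I would unpack the definition of strong self-absorption directly. By hypothesis there is a $G$-equivariant isomorphism $\Phi:A\to A\otimes\D$ that is approximately $G$-unitarily equivalent to the first factor embedding $\iota_A$. Given $F_A,F_\D,K_G,\ep$, pick a $G$-continuous unitary $u\in A\otimes\D$ such that $\|u\Phi(a)u^*-a\otimes 1\|<\ep$ for $a\in F_A$ and $\|(\alpha_g\otimes\delta_g)(u)-u\|$ is sufficiently small on $K_G$. Setting
$$\mu(d)=\Phi^{-1}(u^*(1\otimes d)u), \qquad \sigma(a)=\Phi^{-1}(u^*(a\otimes 1)u)$$
gives unital $\ast$-homomorphisms whose ranges commute, since $a\otimes 1$ commutes with $1\otimes d$ in $A\otimes\D$. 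Property (I) follows from the unitary-equivalence estimate, while (E)$_\mu$ and (E)$_\sigma$ follow by writing the equivariance defect in the form $\Phi^{-1}\bigl((\alpha_g\otimes\delta_g)(u)^*(\,\cdot\,)(\alpha_g\otimes\delta_g)(u)-u^*(\,\cdot\,)u\bigr)$, evaluated on $1\otimes\delta_g(d)$ and on $\alpha_g(a)\otimes 1$ respectively, and bounding by $\|(\alpha_g\otimes\delta_g)(u)-u\|$.

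The implication (c)$\Rightarrow$(b) is then immediate by taking $\psi=\mu$: (U) and (M) hold exactly, (E) is inherited from (E)$_\mu$, and (C) follows from commutativity of the ranges of $\mu$ and $\sigma$ combined with (I), via the identity
$$a\mu(d)-\mu(d)a=(a-\sigma(a))\mu(d)-\mu(d)(a-\sigma(a)).$$

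For (b)$\Rightarrow$(a), use separability of $\D$ and $A$ together with second countability of $G$ to choose exhaustions $F_{\D,n}\nearrow\D$, $F_{A,n}\nearrow A$, $K_{G,n}\nearrow G$ and $\ep_n\to 0$; apply (b) to obtain a sequence of CPC maps $\psi_n:\D\to A$ satisfying the four conditions for $(F_{\D,n},F_{A,n},K_{G,n},\ep_n)$. The induced map $\Psi:\D\to A_\infty$ given by $\Psi(d)=[\psi_n(d)]$ is a unital $\ast$-homomorphism by (U) and (M), commutes with the image of $A$ by (C), and most crucially lands in the continuous part of $A_\infty$ and is globally $G$-equivariant: for each fixed $g\in G$ one has $g\in K_{G,n}$ eventually, so (E) yields $\alpha_g^\infty(\Psi(d))=\Psi(\delta_g(d))$, while the estimate $\|\alpha_g(\psi_n(d))-\psi_n(d)\|\leq \ep_n+\|\delta_g(d)-d\|$ gives continuity of $g\mapsto\alpha_g^\infty(\Psi(d))$ uniformly in $n$. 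Composing with the quotient by the annihilator of $A$ in $A_\infty$ produces a unital $G$-equivariant $\ast$-homomorphism $\D\to F_\infty(A,\alpha)$, and Szab\'o's characterization then yields $(\D,\delta)$-stability. The main obstacle is precisely this last step: the uniformity of the almost-equivariance estimate (E) over compact subsets of $G$ is what permits a single compact-exhaustion argument to upgrade approximate equivariance to genuine equivariance and, simultaneously, to place the image in the continuous part of the equivariant sequence algebra; the lifting result of the authors with Thomsen enters when one later needs the reverse passage, producing almost-equivariant CPC lifts of equivariant maps into $F_\infty(A,\alpha)$ required for the non-unital refinements.
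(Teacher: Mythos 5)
Your implications (c)$\Rightarrow$(b) and (b)$\Rightarrow$(a) are essentially correct. The commutator identity $a\mu(d)-\mu(d)a=(a-\sigma(a))\mu(d)-\mu(d)(a-\sigma(a))$ does give (C) (up to a harmless factor $2\max_{d\in F_\D}\|d\|$), and your sequence-algebra argument for (b)$\Rightarrow$(a) is the same as the paper's proof of (4)$\Rightarrow$(1): equivariance of $\Psi$ is checked pointwise, and the estimate $\|\alpha_g(\psi_n(d))-\psi_n(d)\|\leq\ep_n+\|\delta_g(d)-d\|$ (valid once $g\in K_{G,n}$, using contractivity of $\psi_n$) is exactly what forces the image into the continuous part $F_{\I,\alpha}(A)$ before invoking Szab\'o's Theorem~3.7. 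Your route also correctly identifies that, in the unital case, one can close the circle through (c) without the almost-equivariant lifting theorem, whereas the paper's route (1)$\Rightarrow$(2)$\Rightarrow$(3) uses that lifting theorem essentially.

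The genuine gap is in (a)$\Rightarrow$(c): you start from ``a $G$-equivariant isomorphism $\Phi\colon A\to A\otimes\D$ that is approximately $G$-unitarily equivalent to the first factor embedding.'' Neither part of this is available. $(\D,\delta)$-stability is defined as \emph{cocycle} conjugacy of $\alpha$ with $\alpha\otimes\delta$ (\autoref{df:cocycleEq}), so in general there is no honestly equivariant isomorphism $A\cong A\otimes\D$; this is precisely the equivariant difficulty, and the paper circumvents it via Lemma~4.2 of~\cite{Sza_stronglyI_2018}, which only produces an isomorphism $\theta_A\colon A\otimes\D\to A$ whose equivariance defect is small \emph{on the prescribed compact set} $K_G$. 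Your verifications of (E)$_\mu$ and (E)$_\sigma$ commute $\alpha_g$ past $\Phi^{-1}$ exactly, so they collapse without exact equivariance; with $\theta_A$ one must carry the extra defect terms through every estimate (as the paper does in (3.8)--(3.12)). Moreover, the approximate $G$-unitary equivalence of the isomorphism with the first-factor embedding is an unproved assertion in your argument: the paper instead achieves the analogue of your condition (I) by a different mechanism, namely an automorphism $\theta_\D=\Ad(v)\circ\psi$ of $\D$ coming from strong self-absorption, chosen so that $\theta_\D(d\otimes 1)\approx d$ on the relevant finite set while $\Ad(v)$ is almost $\delta$-invariant on $K_G$, and then sets $\sigma=\theta_A\circ(\id_A\otimes\theta_\D)\circ(\theta_A^{-1}\otimes 1)$ and $\mu(d)=\theta_A(1\otimes\theta_\D(1\otimes d))$. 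Your overall construction is repairable along these lines, but as written the first implication rests on a hypothesis strictly stronger than $(\D,\delta)$-stability.
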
 

In condition (b) above, it is possible to work with \emph{finite} subsets of $G$
instead, by adding a condition controling the continuity modulus of $\alpha$
on the range of $\psi$; see part~(4) of \autoref{thm:CharactDabs}.

Throughout this work, and particularly in the proof 
of Theorem~\ref{thm-local-intro}, the existence 
of almost equivariant lifts for 
equivariant completely positive maps, obtained in 
\cite{ForGarTom_asymptotic_2021}, is a crucial tool.
For compact, or even amenable groups, said lifting 
results are easier to obtain and have been used in the 
past to prove particular cases of Theorem~\ref{thmx:FiberstoBundle}. Our biggest contribution
is thus in the non-amenable setting, where averaging 
arguments cannot be carried out and one instead has to 
resort to more delicate arguments; see specifically
Section~2 in~\cite{ForGarTom_asymptotic_2021}. 
\vspace{.2cm}

\textbf{Acknowledgement.} The present work is
the second part of a project initiated 
together with Klaus Thomsen in
\cite{ForGarTom_asymptotic_2021}, and 
Klaus was initially a 
coauthor in this paper. After posting it 
to the arxiv, he decided he had not contributed 
enough, and preferred not to be a coauthor. 
This paper benefited greatly from his insights, 
for which we are very thankful.

\section{Absorption of strongly self-absorbing actions}
In this section, we recall the definition and some basic facts about 
strongly self-absorbing actions, and obtain a new characterization of 
absorption of such an action in terms of almost equivariant and almost
multiplicative completely positive maps into the given algebra; see \autoref{thm:CharactDabs}.

Given a locally compact group $G$, a \emph{$G$-algebra} is a pair $(A,\alpha)$ consisting of a \ca\ $A$ and a group homomorphism 
$\alpha\colon G\to\Aut(A)$, also called an \emph{action},
which is continuous with respect to the 
point-norm topology in $\Aut(A)$.

We begin by reviewing some results obtained in
\cite{ForGarTom_asymptotic_2021} together with Thomsen, about lifts of
equivariant completely positive contractive maps.

\subsection{Lifts of completely positive equivariant maps}
Let $(A,\alpha)$ and $(B,\beta)$ be $G$-algebras, let 
$I$ be a $G$-invariant
ideal in $B$ with associated quotient map $\pi\colon B\to B/I$, and let $\varphi \colon A\to B/I$ be an equivariant completely positive contractive map. 
\[\xymatrix{& B\ar[d]^{\pi}\\
A\ar[r]_-{\varphi}\ar@{-->}^{\psi}[ur] & B/I}\]
If $A$ is nuclear (as it will be in the situations we are interested in), then the Choi-Effros lifting theorem 
\cite{ChoEff_completely_1976} guarantees the existence 
of a completely positive contractive map $\psi\colon A\to B$
making the above diagram commutative.
Since we work in the equivariant category, we will 
be interested in obtaining lifts for $\varphi$ which
are ``almost'' equivariant. We will
control the failure of equivariance of a lift $\psi$ 
by obtaining small bounds for the quantity
\[\max_{a\in F_A}\max_{g\in K_G}\|(\psi\circ\alpha_g)(a)-(\beta_g\circ\psi)(a)\|,\] 
for a finite subset $F_A\subseteq A$ and a compact
subset $K_G\subseteq G$. We call the above quantity the 
\emph{equivariance modulus} of $\psi$ with respect to 
$F_A$ and $K_G$.

In fact, we will need to work in a more general setting,
and we will in particular not assume that $\psi$ is 
equivariant. Hence, we do not expect to find almost
equivariant lifts, and instead we will
be interested in finding lifts whose equivariance
moduli are controled by those of $\psi$. This was achieved jointly with Thomsen
in \cite{ForGarTom_asymptotic_2021}, and 
we recall the precise statement for use in this work.

\begin{thm}\label{thm:EqChoiEffros}
Let $G$ be a second countable, locally compact group, let $(A,\alpha)$ 
and $(B,\beta)$ be $G$-algebras with $A$ separable. Let $I$ be a 
$\beta$-invariant 
ideal in $B$. Denote by $\pi\colon B\to B/I$ the canonical quotient map, 
and by $\overline{\beta}\colon G\to \Aut(B/I)$
the induced action. Let $\varphi\colon A\to B/I$ be a
nuclear completely 
positive contractive map.

Given $\varepsilon>0$, a finite subset $F_A\subseteq A$ and a compact subset $K_G\subseteq G$, 
there exists a completely 
positive contractive map $\psi\colon A\to B$ with $\pi\circ \psi=\varphi$,
satisfying
\[\|(\psi\circ\alpha_g)(a)- (\beta_g\circ\psi)(a)\| \leq  
\|(\varphi\circ\alpha_g)(a)- (\overline{\beta}_g\circ\varphi)(a)\| +\ep\]
for all $g\in K_G$ and $a\in F_A$.
\end{thm}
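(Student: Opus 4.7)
The plan is to begin with an arbitrary completely positive contractive lift $\psi_0 \colon A \to B$ of $\varphi$, which exists by the Choi-Effros lifting theorem since $\varphi$ is nuclear, and then to modify $\psi_0$ by compressing with a suitably chosen positive contraction $e \in I$. The guiding observation is that the equivariance defects
\[
\Dt_g(a) := \psi_0(\alpha_g(a)) - \beta_g(\psi_0(a))
\]
satisfy $\pi(\Dt_g(a)) = \varphi(\alpha_g(a)) - \ov{\beta}_g(\varphi(a))$, so their images in $B/I$ already have norm equal to the right-hand side of the desired inequality minus $\ep$. Hence it suffices to compress $\psi_0$ so as to pull $\|\Dt_g(a)\|$ itself down to within $\ep$ of $\|\pi(\Dt_g(a))\|$, uniformly for $a\in F_A$ and $g\in K_G$.

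Concretely, I would use a positive contraction $e \in I$ enjoying two properties simultaneously, for prescribed tolerances. First, $e$ should be approximately quasi-central with respect to the finite set $S := \{\psi_0(\alpha_g(a)),\, \beta_g(\psi_0(a)),\, \Dt_g(a) : a \in F_A, g \in K_G\}$; the key input here is the standard fact that for a quasi-central approximate unit $(e_\lambda)$ of $I$ and any $b \in B$, one has $\lim_\lambda \|(1-e_\lambda)^{1/2} b (1-e_\lambda)^{1/2}\| = \|\pi(b)\|$. Second, $\beta_g(e)$ should be close to $e$ uniformly for $g \in K_G$, so that by continuous functional calculus $\beta_g((1-e)^{1/2})$ is close to $(1-e)^{1/2}$. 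Producing such an $e$ is the main technical hurdle: for amenable groups one can simply average over $G$, but in the general locally compact setting one must instead rely on the equivariant quasi-central approximate unit construction carried out in Section~2 of \cite{ForGarTom_asymptotic_2021}, where a careful interplay between the continuity of $\beta$ on the compact set $K_G$ and the Arveson-style approximate unit technology is needed.

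Given such an $e$, set $\psi(a) := (1-e)^{1/2}\psi_0(a)(1-e)^{1/2}$. Then $\psi$ is completely positive contractive since $0 \leq 1-e \leq 1$, and satisfies $\pi \circ \psi = \varphi$ because $\pi(e) = 0$. Expanding,
\[
\psi(\alpha_g(a)) - \beta_g(\psi(a)) = (1-e)^{1/2}\psi_0(\alpha_g(a))(1-e)^{1/2} - \beta_g\bigl((1-e)^{1/2}\bigr)\beta_g(\psi_0(a))\beta_g\bigl((1-e)^{1/2}\bigr),
\]
and using the second property of $e$ to replace $\beta_g((1-e)^{1/2})$ by $(1-e)^{1/2}$ up to error at most $\ep/2$, this expression is within $\ep/2$ in norm of $(1-e)^{1/2}\Dt_g(a)(1-e)^{1/2}$. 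The first property of $e$ then forces the norm of the latter to be within $\ep/2$ of $\|\pi(\Dt_g(a))\|$, yielding the required bound
\[
\|\psi(\alpha_g(a)) - \beta_g(\psi(a))\| \leq \|\pi(\Dt_g(a))\| + \ep = \|\varphi(\alpha_g(a)) - \ov{\beta}_g(\varphi(a))\| + \ep.
\]
As indicated, the one delicate step is the construction of $e$; everything else is a routine application of functional calculus and the Choi-Effros theorem.
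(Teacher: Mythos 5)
Your argument is correct and follows essentially the same route as the paper: the paper's proof consists of combining the Choi--Effros lifting theorem with Theorem~3.4 of \cite{ForGarTom_asymptotic_2021}, and your compression of a Choi--Effros lift by $(1-e)^{1/2}$ for a positive contraction $e\in I$ that is approximately $\beta$-invariant on $K_G$ and far enough along an approximate unit of $I$ is precisely the mechanism behind that cited theorem. The one step you defer --- producing such an $e$ for general, possibly non-amenable, locally compact $G$ --- is exactly the content of Section~2 of \cite{ForGarTom_asymptotic_2021} on which the paper also relies, and the remaining details (passing from the compact set $\{\Delta_g(a): a\in F_A,\ g\in K_G\}$ to a finite net via continuity of $g\mapsto\Delta_g(a)$, and noting that $\|(1-e)^{1/2}b(1-e)^{1/2}\|\to\|\pi(b)\|$ already holds for any approximate unit of $I$, without quasi-centrality) are routine.
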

\begin{proof} This follows immediately from combining
the Choi-Effros lifting theorem \cite{ChoEff_completely_1976}
with Theorem~3.4 in~\cite{ForGarTom_asymptotic_2021}.
\end{proof}

When $A$, $B$ and $\varphi$ are unital, one cannot
always construct lifts as above which are moreover unital;
see Section~4 in~\cite{ForGarTom_asymptotic_2021}.
In our setting, a unital, asymptotically equivariant 
lifts exist if and only if 
there is a sequence of asymptotically
$G$-equivariant completely positive unital maps 
$A\to B$, which is in particular the case if $B$
admits a $G$-invariant state.

\subsection{Strongly self-absorbing actions}
For a unital C*-algebra $D$, we write $\mathcal{U}(D)$
for its unitary group.

Let $(A,\alpha)$ and $(B,\beta)$ be $G$-algebras, 
and let $\widetilde{\beta}$ denote the strictly
continuous extension of $\beta$ to $M(B)$. 
Recall that two 
equivariant homomorphisms $\varphi,\psi\colon (A,\alpha)\to (B,\beta)$ are said to be 
\emph{approximately $G$-unitarily equivalent}, written
$\varphi\approx_{G,u} \psi$, if for every $\ep>0$, every compact subset
$K\subseteq G$, and all finite subsets
$F_A\subseteq A$ and $F_B\subseteq B$, 
there exists a unitary $u\in M(B)$ such that $\|\varphi(a)-u\psi(a)u^*\|<\ep$
for all $a\in F_A$ and 
\[\max_{b\in F_B}\max_{g\in K}\|\widetilde{\beta}_g(u)b-ub\|<\ep  \ \ \mbox{ and } \ \ 
\max_{b\in F_B}\max_{g\in K}\|b\widetilde{\beta}_g(u)-bu\|<\ep.\]
Equivalently, there exists a net $(u_j)_{j\in J}$ in $\mathcal{U}(M(B))$
such that $\widetilde{\beta}_g(u_j) - u_j$ 
converges strictly to zero in $M(B)$, 
uniformly on compact subsets
of $G$, and $\Ad(u_j)\circ \psi$ converges pointwise to $\varphi$ in norm. 

\begin{df} \label{df:ssa}
Let $G$ be a second countable, locally compact group and let
$(\D,\delta)$ be a $G$-algebra. We say that $(\D,\delta)$ is 
\emph{strongly self-absorbing}, if $\D$ 
is separable, unital and infinite-dimensional, and there
is an equivariant isomorphism $(\D,\delta)\cong (\D\otimes\D,\delta\otimes\delta)$ 
which is approximately $G$-unitarily equivalent to the (equivariant) first factor embedding 
$\mathrm{id}_{\D} \otimes 1 \colon (\D, \delta)\to (\D \otimes \D, \delta \otimes \delta)$.
\end{df}

If $(\D,\delta)$ is a strongly self-absorbing $G$-algebra, then clearly $\D$
is strongly self-absorbing in the sense of \cite{TomWin_strongly_2007}. In particular, 
$\D$ is simple and nuclear; 
see Proposition~1.5 and Theorem~1.6 in~\cite{TomWin_strongly_2007}.

The property of being strongly self-absorbing is not a common one, and most of the 
naturally occurring examples of ($G$-)algebras fail to be 
strongly self-absorbing. On the other hand, the property of \emph{absorbing} a given
strongly self-absorbing $G$-algebra is a much more common and 
useful one.
Here, absorption is always meant up to cocycle conjugacy, in the following sense:

\begin{df}\label{df:cocycleEq}
Let $G$ be a locally compact group and let 
$(A,\alpha)$ and $(B,\beta)$ be $G$-algebras, 
and let $\omega \colon G\to \U(M(A))$ be a strictly continuous map.
\begin{itemize}
\item[(a)] We say that $\omega$ is a \emph{1-cocycle for $\alpha$} if 
$\omega_g\alpha_g(\omega_{h})=\omega_{gh}$ for all $g,h\in G$. In this case, 
we let $\alpha ^{\omega}\colon G\to\Aut(A)$ be the action 
given by $\alpha^{\omega}_g=\Ad(\omega_g) \circ \alpha_g$ for all $g\in G$.
\item[(b)] We say that $\alpha$ and $\beta$ are \emph{cocycle conjugate}, 
denoted $\alpha \simeq_{cc} \beta$, if there exists a 1-cocycle $\omega$ 
for $\alpha$ such that $\alpha^\omega$ is conjugate to $\beta$.
\end{itemize}

If $(D,\delta)$ is another $G$-algebra, we say that $(A,\alpha)$ \emph{absorbs
$(D,\delta)$}, or that $(A,\alpha)$ is \emph{$(D,\delta)$-stable}, 
if $\alpha\otimes\delta\simeq_{cc} \alpha$.
\end{df}

When $(\D,\delta)$ is strongly self-absorbing, $\delta$-absorption
can be characterized in terms of central sequence algebras; see 
Theorem~3.7 of~\cite{Sza_stronglyI_2018}. 
In this section, we use \autoref{thm:EqChoiEffros} to obtain a new characterization of 
this property in terms of completely positive maps into the given algebra 
(instead of its central sequence); see \autoref{thm:CharactDabs}.
Our result is an equivariant version of Theorem~4.1 
in~\cite{HirRorWin_algebras_2007}, but we warn the reader that there
are several additional technicalities in the equivariant setting,
mostly related to the lack of continuity of the induced action on 
the (central) sequence algebra.  

\begin{df}\label{df:CentralSeqAlg}
Let $A$ be a \ca. We write $A_\I$ for its 
\emph{sequence algebra}, that is, $A_\I=\ell^\I(\N,A)/c_0(\N,A)$, 
and we write $\pi_A\colon \ell^\I(\N,A)\to A_\I$ for the canonical
quotient map.
Identifying $A$ with the subalgebra of $\ell^\I(\N,A)$ consisting of the constant
sequences, and with its image in $A_\I$, we write $A_\I\cap A'$ for the relative 
commutant, which we call the \emph{central sequence algebra}. We let
\[\mathrm{Ann}(A,A_\I)=\{x\in A_\I\colon xA=Ax=\{0\}\}\]
denote the \emph{annihilator} of $A$ in $A_\I$. Then $\mathrm{Ann}(A,A_\I)$ is 
an ideal in $A_\I\cap A'$ (but not in general an ideal in 
$A_\I$). We let 
$F_\infty(A)$ denote the corresponding quotient, and write 
$\kappa_A\colon A_\I\cap A'\to F_\infty(A)$ for the canonical quotient map. 

Assume that $A$ is $\sigma$-unital. 
If $(a_n)_{n\in\N}$ is an approximate unit for $A$, and denoting by 
$a\in \ell^\I(\N,A)$ the element it determines, then 
$\kappa_A(\pi_A(a))$ is a (and hence \emph{the}) unit for $F_\infty(A)$.
\end{df} 




Let $G$ be a locally compact group, 
and let $\alpha\colon G\to\Aut(A)$ be a continuous action. For each $g\in G$, the 
automorphism $\alpha_g$ induces canonical automorphisms of 
$\ell^\I(\N,A)$, $A_\I\cap A'$ and $F_\infty(A)$, which will be denoted 
by $\alpha_g^\I$, $(\alpha_\I)_g$ and $F_\infty(\alpha)_g$, respectively. 
The resulting maps $\alpha^\I$, $\alpha_\I$ and $F(\alpha)$ are group 
homomorphisms and make the quotient maps $\pi_A$ and $\kappa_A$ equivariant,
but they fail in general
to be \emph{continuous} actions. 
In order to lighten the notation, we will denote the continuous
parts of $\ell^\I(\N,A)$, $A_\I$ and $F_\I(A)$ by
$\ell^\I_\alpha(\N,A)$, $A_{\I,\alpha}$ and $F_{\I,\alpha}(A)$,
respectively. Observe that the continuous part of $A_\I\cap A'$
is just $A_{\I,\alpha}\cap A'$.

Since the lifting results presented in the previous subsection apply only to 
continuous actions and surjective maps, 
the following will be needed in the sequel.

\begin{prop}\label{prop:CtsPartQuotient}
Let $G$ be a locally compact group and let $(A,\alpha)$ be a 
$G$-algebra.
Then $\pi_A\left( \ell^\I(\N,A)\right) =A_{\I,\alpha}$, and hence 
the restriction $\pi_A\colon \ell^\I_\alpha(\N,A)\to 
A_{\I,\alpha}$
of $\pi_A$ to the continuous part of 
$\ell^\I(\N,A)$ is an equivariant quotient 
map. If $A$ is separable, then
$\kappa_A(A_{\I,\alpha}\cap A') =  F_{\I,\alpha}(A)$ and thus 
$\kappa_A\colon A_{\I,\alpha}\cap A'\to F_{\I,\alpha}(A)$ is also an equivariant quotient map. 
\end{prop}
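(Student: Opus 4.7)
Both assertions reduce to surjectivity modulo easy inclusions. The containments $\pi_A(\ell^\I_\alpha(\N, A)) \S A_{\I,\alpha}$ and $\kappa_A(A_{\I,\alpha} \cap A') \S F_{\I,\alpha}(A)$ are immediate from the equivariance of $\pi_A$ and $\kappa_A$. Further $c_0(\N,A) \S \ell^\I_\alpha(\N,A)$: given $(b_n) \in c_0(\N,A)$ and $\ep > 0$, pick $N$ with $\|b_n\| < \ep/3$ for $n \ge N$, so that $\|\alpha_g(b_n) - b_n\| < \ep$ for $n \ge N$ and any $g$; for $n < N$, continuity of $\alpha$ at each of the finitely many $b_n$ gives a common neighborhood $V$ of $e$ on which $\|\alpha_g(b_n) - b_n\| < \ep$. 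Combining, $\sup_n \|\alpha_g(b_n) - b_n\| < \ep$ for $g \in V$, so the orbit of $(b_n)$ is norm-continuous in $\ell^\I(\N,A)$ at $e$ (hence everywhere). Thus both ``quotient map'' claims follow once surjectivity is established.

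The main tool is an averaging operator: for nonnegative $f \in C_c(G)$ with $\int f\,d\mu = 1$ (Haar measure $\mu$), define $T_f\colon \ell^\I(\N,A) \to \ell^\I(\N,A)$ componentwise by $T_f((a_n))_n = \bigl(\int f(g)\alpha_g(a_n)\,d\mu(g)\bigr)_n$. A direct change-of-variables computation produces a uniform-in-$n$ bound $\|\alpha_h T_f(x)_n - T_f(x)_n\| \le C_f(h)\|x_n\|$, where $C_f\colon G\to [0,2]$ depends only on $f$ and $C_f(h)\to 0$ as $h \to e$; hence $T_f$ maps $\ell^\I(\N,A)$ into $\ell^\I_\alpha(\N,A)$. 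Since $T_f$ preserves $c_0(\N,A)$, it descends to $A_\I\to A_\I$ commuting with $\pi_A$; using $\alpha_g(A) = A$ together with the identity $\alpha_g^\I(a)b = \alpha_g^\I\bigl(a\,\alpha_{g^{-1}}(b)\bigr) = 0$ for $a \in \Ann(A, A_\I)$ and $b\in A$, this descent further preserves $A' \cap A_\I$ and $\Ann(A, A_\I)$, so it descends to $F_\I(A)$ commuting with $\kappa_A$. Finally, for $y\in A_{\I,\alpha}$ the integral $T_f(y) = \int f(g)\alpha_g^\I(y)\,d\mu(g)$ converges in $A_{\I,\alpha}$ and satisfies $\|T_f(y)-y\|\le \sup_{g\in\supp f}\|\alpha_g^\I(y)-y\|$, which is arbitrarily small when $\supp f$ is a small enough neighborhood of $e$.

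For surjectivity in Part~1, fix $y \in A_{\I,\alpha}$ and iteratively set $y_1 := y$, choose $f_k \in C_c(G)$ with $\supp f_k$ small enough that $\|y_{k+1}\| := \|y_k - T_{f_k}(y_k)\| < 2^{-k}$, and pick a lift $(a_n^{(k)})_n \in \ell^\I(\N,A)$ of $y_k$ with $\sup_n \|a_n^{(k)}\| \le \|y_k\|+2^{-k}$. Then $(\tilde a_n^{(k)})_n := T_{f_k}((a_n^{(k)}))_n$ lies in $\ell^\I_\alpha(\N,A)$, lifts $T_{f_k}(y_k)$, and the norms $\sup_n \|\tilde a_n^{(k)}\|$ are summable in $k$. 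Define $\tilde a_n := \sum_k \tilde a_n^{(k)}$; the series converges uniformly in $n$ and $\pi_A((\tilde a_n)_n) = \sum_k T_{f_k}(y_k) = y$ by telescoping. The main obstacle is showing $(\tilde a_n)_n \in \ell^\I_\alpha(\N,A)$: the uniform-in-$n$ equicontinuity follows from $\|\alpha_h \tilde a_n - \tilde a_n\| \le \sum_k C_{f_k}(h)\|a_n^{(k)}\|$ by splitting at some $M$, controlling the tail via $C_{f_k}(h)\le 2$ and the summability of $(\sup_n\|a_n^{(k)}\|)_k$, and then shrinking $h$ close to $e$ to control the finite head via $C_{f_k}(h)\to 0$.

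Part~2 runs the same inductive averaging entirely inside $A_\I \cap A'$, so no $\ell^\I$-lifting is needed. Given $z \in F_{\I,\alpha}(A)$ and any lift $y \in A_\I\cap A'$, set $y_1 := y$; at each step replace $y_k$ by a representative $y_k'$ of its class in $F_\I(A)$ with $\|y_k'\|_{A_\I}\le 2\|\kappa_A(y_k)\|_{F_\I(A)}$ (possible by the definition of the quotient norm), set $\tilde y_k := T_{f_k}(y_k')$, and put $y_{k+1} := y_k'-\tilde y_k$. Each $\kappa_A(y_k)$ lies in $F_{\I,\alpha}(A)$ inductively, so $f_k$ can be chosen with $\|\kappa_A(y_{k+1})\|_{F_\I(A)} \le \tfrac{1}{2}\|\kappa_A(y_k)\|_{F_\I(A)}$, yielding geometric decay and summability of $\|\tilde y_k\|\le 2\|\kappa_A(y_k)\|_{F_\I(A)}$. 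Hence $\tilde y := \sum_k \tilde y_k$ converges in $A_\I$, lies in $A_{\I,\alpha}\cap A'$ (each summand does and both properties are preserved under norm limits), and satisfies $\kappa_A(\tilde y) = z$ by telescoping in $F_\I(A)$.
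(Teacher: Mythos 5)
Your argument for the first assertion is correct and is genuinely different from the paper's: the authors simply invoke Theorem~2 of \cite{Bro_continuity_2000}, which gives the stronger fact that \emph{every} $\pi_A$-preimage of an element of $A_{\I,\alpha}$ already lies in $\ell^\I_\alpha(\N,A)$, whereas you construct one continuous lift by hand via the mollifiers $T_f$ and a geometrically convergent correction series. The key estimate $\|y-T_f(y)\|_{A_\I}\le\sup_{g\in\supp f}\|(\alpha_\I)_g(y)-y\|$ driving your induction is legitimate: it follows from the formula $\|\pi_A(z)\|=\limsup_n\|z_n\|$ together with reverse Fatou (the integrands $g\mapsto f(g)\|x_n-\alpha_g(x_n)\|$ are uniformly dominated), and the rest of that paragraph is sound.

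The second assertion, however, has a genuine gap at its central step. You claim that, because $\kappa_A(y_k)\in F_{\I,\alpha}(A)$, one can choose $f_k$ with $\|\kappa_A(y_{k+1})\|\le\tfrac12\|\kappa_A(y_k)\|$; unwinding, this is the inequality $\|w-\overline{T}_f(w)\|_{F_\I(A)}\le\sup_{g\in\supp f}\|F_\I(\alpha)_g(w)-w\|$ for $w$ in the continuous part, where $\overline{T}_f$ is the descent of $T_f$. In $A_\I$ you proved the analogous bound using the explicit $\limsup$ description of the norm; in $F_\I(A)$ the norm is the quotient norm modulo $\Ann(A,A_\I)$, and continuity of the orbit of $w$ tells you nothing about the orbit of the chosen lift $y_k'$ in $A_\I$, so neither the $A_\I$-estimate nor the trivial bound $\|\kappa_A(u)\|\le\|u\|$ yields what you need (the latter only gives $\|w-\overline{T}_f(w)\|\le 4\|w\|$, which does not close the induction). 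The inequality is in fact true, but proving it requires an extra ingredient, e.g.\ the identity $\|\kappa_A(u)\|=\sup\{\|ua\|\colon a\in A,\ \|a\|\le 1\}$ for $u\in A_\I\cap A'$, after which your reverse-Fatou computation goes through verbatim. That identity needs a diagonal argument with an approximate unit and is precisely where the separability (or $\sigma$-unitality) of $A$ enters --- a hypothesis your proof never uses, which is a sign that something is missing. This missing lemma is essentially the content of Lemma~4.7 in~\cite{Sza_stronglyII_2018}, which is what the paper cites for this half of the proposition.
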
 
\begin{proof}
The result for $\pi_A$ is a consequence of 
Theorem~2 in~\cite{Bro_continuity_2000},
since it follows from said theorem
that the preimage under $\pi_A$ of any 
element in $A_{\I,\alpha}$ already
belongs to $\ell^\I_\alpha(\N,A)$.
When $A$ is separable, the claim about 
$\kappa_A$ follows from 
Lemma~4.7 in~\cite{Sza_stronglyII_2018}.
\end{proof}

\subsection{Local characterizations of absorption}
In \autoref{thm:CharactDabs}, we give new
characterizations 
for a $G$-algebra $(A,\alpha)$ to absorb a given
strongly self-absorbing $G$-algebra 
in terms of 
almost equivariant maps into $A$. 
The following notation will be used in its statement and proof, 
and will also be used repeatedly in the following section.

\begin{nota}\label{nota:Abbreviations}
Let $G$ be a locally compact group, let $(A,\alpha)$ and $(D,\delta)$
be $G$-algebras with $D$ unital, and let $\psi\colon D\to A$ be a completely positive 
contractive map. Given finite subsets $F_D\subseteq D$ and $F_A\subseteq A$,
a compact subset 
$K_G\subseteq G$, and $\ep>0$, we 
say that $\psi$ is
\bi
\item[U:] $(F_A,\ep)$-\underline{u}nital, if $\|\psi(1)a-a\|<\ep$ for all $a\in F_A$; 
\item[C:] $(F_D,F_A,\ep)$-\underline{c}entral, if $\|a\psi(d)-\psi(d)a\|<\ep$ for all $a\in F_A$
and $d\in F_D$;
\item[M:] $(F_D,F_A,\ep)$-\underline{m}ultiplicative, if $\|a(\psi(dd')-\psi(d)\psi(d'))\|<\ep$
for all $a\in F_A$ and all $d,d'\in F_D$;
\item[E:] $(F_D,K_G,\ep)$-\underline{e}quivariant, if $\max\limits_{g\in K_G}\|\alpha_g(\psi(d))-\psi(\delta_g(d))\|<\ep$
for all $d\in F_D$. 
\ei 
Whenever the tuple $(F_D,F_A,K_G,\ep)$ is clear from the context, 
we will refer to the above conditons as 
(U), (C), (M), and (E).
\end{nota}

In the proof of the following theorem, 
we will repeatedly apply the lifting results from Subsection~2.1 to the qutient maps 
$\pi_A$ and $\kappa_A$. When lifting
along $\pi_A$, we will need the most general lifting result from
\autoref{thm:EqChoiEffros}, where the given map is not 
assumed to be equivariant. 

We make some comments about part~(3) of
\autoref{thm:CharactDabs} below. 
It doesn't seem possible to give a ``finitistic'' characterization 
(that is, in terms of finite sets and $\ep>0$) of the existence of a map
into the \emph{continuous part} of $A_\I$ (or of $F(A)$). Indeed,
for a map into $A_\I$ to land in the continuous part, the individual 
maps into $A$ ought to be \emph{equicontinuous}, and this cannot
be described in a finitistic manner. The solution that we adopted takes 
advantage of the fact that the resulting map into $A_\I$ will have 
the property of being equivariant, and thus the continuity moduli
of the codomain action are controlled by the continuity moduli
of the domain action. 

\begin{thm}\label{thm:CharactDabs}
Let $G$ be a second countable, locally compact group and 
let $(A,\alpha)$ be a separable $G$-algebra. 
For a strongly self-absorbing $G$-algebra
$(\D,\delta)$, consider the following conditions:
\be
\item $(A, \alpha)$ is $(\D,\delta)$-stable.

\item There exists a 
completely positive contractive map 
$\sigma\colon \D\to \ell^\I(\N,A)$ such that the range 
of $\pi_A\circ\sigma$ is contained in 
$A_{\infty, \alpha} \cap A'$ and $\pi_A\circ\sigma\colon 
\D\to A_{\infty, \alpha}$ is equivariant,
and such 
that $\kappa_A\circ\pi_A\circ\sigma$ is a 
unital homomorphism.

\item For all finite sets $F_\D \subseteq \D$, and $F_A\subseteq A$, all compact
subsets $K_G\subseteq G$, and $\ep>0$,
there is a completely positive contractive map 
$\psi \colon \D\to A$ 
which satisfies conditions (U), (C), (M),
and (E)
for $(F_\D,F_A,K_G,\ep)$.
\item For all finite sets $F_\D \subseteq \D$, $F_A\subseteq A$, and 
$F_G\subseteq G$, every compact neighborhood of the unit $N\subseteq G$,
and every $\ep>0$, there is a completely positive contractive map 
$\psi \colon \D\to A$ 
which satisfies conditions (U), (C), (M),
and (E)
for $(F_\D,F_A,F_G,\ep)$ as well as 
\[\max\limits_{h\in N}\|\alpha_h(\psi(d))-\psi(d)\|\leq 
\max\limits_{h\in N}\|\delta_h(d)-d\|+\ep\]
for all $d\in F_D$.
\item Given finite subsets $F_\D\subseteq \D$, and $F_A \subseteq A$, a compact
subset $F_G \subseteq G$, and $\ep>0$, 
there exist homomorphisms
$\mu\colon \D \to M(A)$ and $\sigma\colon A\to A$
with $\mu(1)=1$, satisfying:
\be
\item[(5.a)] $\sigma(a)\mu(d)=\mu(d)\sigma(a)$ for all $d\in\D$
and all $a\in A$;
\item[(5.b)] $\|\sigma(a)-a\|<\ep$ for all $a\in F_A$;
\item[(5.c)] $\max\limits_{g\in F_G}\|\mu(\delta_g(d))\alpha_g(a)-\alpha_g(\mu(d)a)\|<\ep\|a\|$ for all $a\in A$ and $d\in F_\D$; and 
\item[(5.d)] $\max\limits_{g\in F_G}\|\sigma(\alpha_g(a))-\alpha_g(\sigma(a))\|<\ep$ for all $a\in F_A$.
\ee
\ee
Then (1), (2), (3) and (4) are equivalent, and (5) implies all 
of them. 
If $A$ is unital then they are all equivalent.
\end{thm}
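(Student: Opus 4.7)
The plan is to follow the broad outline of Theorem~4.1 in~\cite{HirRorWin_algebras_2007}, tracking equivariance moduli and continuity of the action at each step. Two key inputs are needed: Szab\'o's characterization (Theorem~3.7 of~\cite{Sza_stronglyI_2018}) that $(A,\alpha)$ is $(\D,\delta)$-stable if and only if there is a unital equivariant $*$-homomorphism $\D \to F_{\infty,\alpha}(A)$, and the equivariant Choi--Effros lifting of~\autoref{thm:EqChoiEffros}, applied to the equivariant surjections $\pi_A \colon \ell^\infty_\alpha(\N,A) \to A_{\infty,\alpha}$ and $\kappa_A \colon A_{\infty,\alpha} \cap A' \to F_{\infty,\alpha}(A)$ furnished by~\autoref{prop:CtsPartQuotient}.

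For $(1) \Leftrightarrow (2)$, Szab\'o's theorem supplies a unital equivariant $*$-homomorphism $\D \to F_{\infty,\alpha}(A)$, and two successive applications of \autoref{thm:EqChoiEffros}---first along $\kappa_A$, then along $\pi_A$---produce the desired completely positive contractive lift $\sigma$. Conversely, $\kappa_A \circ \pi_A \circ \sigma$ is a unital equivariant $*$-homomorphism into $F_{\infty,\alpha}(A)$, closing the loop via Szab\'o's theorem. The equivalence $(2) \Leftrightarrow (3)$ is a finitistic reformulation: components $\sigma_n \colon \D \to A$ of $\sigma$ satisfy $(U)$, $(C)$, $(M)$ from the corresponding conditions at the $F_{\infty,\alpha}(A)$ level, while uniform equivariance on a compact $K_G$ is obtained from the fact that $\sigma$ takes values in $\ell^\infty_\alpha(\N,A)$ by \autoref{prop:CtsPartQuotient}, making the families $g \mapsto \alpha_g(\sigma_n(d))$ equicontinuous in $n$ and upgrading pointwise-in-$g$ vanishing to uniform vanishing on $K_G$. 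Conversely, from $(3)$ with exhausting data $(F_\D^{(n)},F_A^{(n)},K_G^{(n)},1/n)$ one assembles $\sigma(d) = (\sigma_n(d))_n$; the estimate $\|\alpha_h(\sigma_n(d)) - \sigma_n(d)\| \le 1/n + \|\delta_h(d) - d\|$ for $h$ in a small neighborhood $N \ni e$ shows that $\pi_A \circ \sigma$ actually lands in the continuous part $A_{\infty,\alpha}$.

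The equivalence $(3) \Leftrightarrow (4)$ is elementary: $(3) \Rightarrow (4)$ follows from the triangle inequality $\|\alpha_h(\psi(d)) - \psi(d)\| \le \|\alpha_h(\psi(d)) - \psi(\delta_h(d))\| + \|\delta_h(d) - d\|$; for $(4) \Rightarrow (3)$, given a compact $K_G$ one picks a symmetric neighborhood $N$ of $e$ with $\max_{h \in N}\|\delta_h(d) - d\| < \ep/4$ on $F_\D$, covers $K_G$ by finitely many translates $g_1 N, \ldots, g_k N$, and applies $(4)$ with $F_G = \{g_1,\ldots,g_k\}$. For $(5) \Rightarrow (1)$, exhausting data produces sequences $(\mu_n)_n$ of unital homomorphisms into $M(A)$ and $(\sigma_n)_n$ of endomorphisms of $A$; by $(5.a)$--$(5.b)$ the sequence $(\mu_n(d))_n$ asymptotically commutes with $A$ and, via a multiplier-extension argument, induces a unital $*$-homomorphism $\D \to F_{\infty,\alpha}(A)$ whose equivariance follows from $(5.c)$--$(5.d)$, after which Szab\'o's theorem gives $(1)$. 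Finally, the unital case $(1) \Rightarrow (5)$ uses a 1-cocycle $\omega$ and an equivariant isomorphism $\Phi \colon (A,\alpha^\omega) \to (A \otimes \D, \alpha \otimes \delta)$: the pullbacks $\mu_0 = \Phi^{-1}(1 \otimes \id_\D)$ and $\sigma_0 = \Phi^{-1}(\id_A \otimes 1)$ are unital homomorphisms with commuting ranges, and $\sigma_0$ is perturbed to be approximately the identity using approximate $G$-unitary equivalence of $\Phi$ with the first-factor inclusion (from strong self-absorption of $(\D,\delta)$); the residual failure of $\alpha$-equivariance versus $\alpha^\omega$-equivariance is absorbed by the cocycle $\omega$ into the tolerances $(5.c)$--$(5.d)$.

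The principal obstacle is the passage between equivariance at the (central) sequence algebra level---a single equation---and uniform equivariance on compact subsets of $G$ at the level of the individual maps. Naive ``one element at a time'' lifts yield only pointwise-in-$g$ control, which is insufficient when $G$ is non-amenable; overcoming this requires the asymptotic equivariant lifting from \cite{ForGarTom_asymptotic_2021} encoded in \autoref{thm:EqChoiEffros}, together with the identification of the continuous parts of the sequence algebras provided by \autoref{prop:CtsPartQuotient}.
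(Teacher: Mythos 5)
Your overall architecture matches the paper's: Szab\'o's central-sequence characterization (Theorem~3.7 of \cite{Sza_stronglyI_2018}), the equivariant Choi--Effros lifting of \autoref{thm:EqChoiEffros}, and \autoref{prop:CtsPartQuotient} are exactly the tools the paper uses, and your reorganization of the cycle (proving $(2)\Rightarrow(1)$ and $(4)\Rightarrow(3)$ directly, and getting $(2)\Rightarrow(3)$ from equicontinuity of the maps $g\mapsto\alpha_g(\sigma_n(d))$ rather than re-lifting) is sound. However, your step $(1)\Rightarrow(2)$ has a genuine gap: ``two successive applications of \autoref{thm:EqChoiEffros}'' do not produce a map $\sigma$ for which $\pi_A\circ\sigma$ is \emph{exactly} equivariant. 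Each application of the lifting theorem only controls the equivariance modulus up to $\ep$ on a prescribed finite subset of $\D$ and compact subset of $G$; to obtain exact equivariance of $\pi_A\circ\sigma$ (which is what condition (2) demands, and what forces the range into $A_{\I,\alpha}\cap A'$), one must take a sequence of lifts $\sigma_n$ with equivariance defect less than $1/n$ on exhausting data and splice them diagonally. The splicing requires two compatibility conditions arranged in advance: the entries of $\sigma_n$ must commute with $F_A^{(n)}$ up to $1/n$ (so the diagonal map still lands in $A'$), and $\sigma_n$ must asymptotically agree with $\sigma_1$ against elements of $A$ --- which holds because both lift the same homomorphism $\Psi$ modulo $\Ann(A,A_\I)$ --- so that the spliced map still satisfies $\kappa_A\circ\pi_A\circ\sigma=\Psi$. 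This reindexing argument is the technical core of the implication and is absent from your outline.

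The second gap is in $(1)\Rightarrow(5)$ for unital $A$. You propose to use an equivariant isomorphism $\Phi\colon(A,\alpha^\omega)\to(A\otimes\D,\alpha\otimes\delta)$ and to ``absorb'' the discrepancy between $\alpha$- and $\alpha^\omega$-equivariance into the tolerances. For a general $1$-cocycle $\omega$ this fails: if $\mu$ is exactly $\alpha^\omega$-$\delta$-equivariant, then $\mu(\delta_g(d))=\omega_g\alpha_g(\mu(d))\omega_g^*$, and nothing forces $\omega_g\alpha_g(\mu(d))\omega_g^*$ to be close to $\alpha_g(\mu(d))$, so (5.c) is not recovered. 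What rescues the argument is that Theorem~3.7 of \cite{Sza_stronglyI_2018} yields \emph{strong} cocycle conjugacy of $\alpha$ and $\alpha\otimes\delta$, and Lemma~4.2 of the same paper then produces, for the given compact set $K_G$ and tolerance, an isomorphism $\theta_A\colon A\otimes\D\to A$ with $\max_{g\in K_G}\|\alpha_g\circ\theta_A-\theta_A\circ(\alpha_g\otimes\delta_g)\|$ small and no cocycle left over; the maps $\mu$ and $\sigma$ are then built from $\theta_A$ together with a unitary perturbation $\Ad(v)\circ\psi$ of an isomorphism $\D\otimes\D\to\D$ coming from strong self-absorption. You should replace the cocycle-absorption step with this. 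Your $(5)\Rightarrow(1)$ is also compressed --- the ``multiplier-extension argument'' is, concretely, compression by a quasicentral approximate unit $(x_n)$ chosen almost $\alpha$-invariant on $K_G^{(n)}$, and the invariance requirement is essential for equivariance of the resulting homomorphism into $F_\I(A)$ --- but this is a standard device rather than a missing idea.
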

\begin{proof}
(1) implies (2): let $(F_\D^{(n)})_{n\in\N}$
and $(F_A^{(n)})_{n\in\N}$
be increasing sequences of finite subsets 
of $\D$ and $A$, respectively, with dense union, and such 
that $\bigcup_{n\in\N}F_\D^{(n)}$ is self-adjoint and closed
under scalar multiplication by elements from $\mathbb{Q}[i]$. Let $(K_G^{(n)})_{n\in\N}$
be an increasing sequence of compact subsets
of $G$ with $G=\bigcup_{n\in\N} K_G^{(n)}$.

Since $(A,\alpha)$ is $(\D,\delta)$-absorbing, we may use Theorem~3.7 
in~\cite{Sza_stronglyI_2018} to fix a unital, equivariant homomorphism 
$\Psi\colon (\D,\delta)\to (F_{\infty,\alpha}(A),F_\infty(\alpha))$. 
Then $\Psi$ is nuclear, because so is $\D$.
By \autoref{thm:EqChoiEffros} and \autoref{prop:CtsPartQuotient}, 
for every $n\in\N$ there exists a
completely positive contractive map 
$\psi_n\colon \D\to A_{\I,\alpha}\cap A'$ satisfying 
$\kappa_A\circ \psi_n=\Psi$ and 
\[\sup_{g\in K_G^{(n)}}\|\psi_n(\delta_g(d))-(\alpha_\I)_g(\psi_n(d))\|<\frac{1}{n}\]
for all $d\in F_\D^{(n)}$. 
Use \autoref{thm:EqChoiEffros}
to find a completely positive contractive map 
$\sigma_n\colon \D\to \ell^\I(\N,A)$ 
with 
\be
\item[(i)] $\pi_A\circ\sigma_n=\psi_n$ and 
\item[(ii)] $\max_{g\in K_G^{(n)}}\|\sigma_n(\delta_g(d))-\alpha^\I_g(\sigma_n(d))\|<1/n$
for all $d\in F_\D^{(n)}$.
\ee
Fix $n\in\N$. Since the image of $\psi_n$ commutes 
with $A$, we have
\[\lim_{k\to\I}\|\sigma_n(d)_ka-a\sigma_n(d)_k\|=0\]
for all $d\in \mathcal{D}$ and all $a\in A$. In particular, there exists
$m_n\in\N$ such that for all $m\geq m_n$ we have
$\|\sigma_n(d)_ma-a\sigma_n(d)_m\|<1/n$ for all $d\in F_\D^{(n)}$ and all 
$a\in F_A^{(n)}$. Thus, upon redefining the first 
$m_n-1$ entries of $\sigma_n$ to be zero, we may 
also assume that
\be
\item[(iii)] $\|\sigma_n(d)_ka-a\sigma_n(d)_k\|<1/n$ for all $k\in\N$, all $d\in F_\D^{(n)}$ and all $a\in F_A^{(n)}$.\ee
Fix $n\in\N$. Since $\sigma_n$ and $\sigma_1$ are both 
lifts of $\Psi$, there exists $k_n\in\N$ such that
whenever $k\geq k_n$, we have
\be
\item[(iv)] 
$\|a(\sigma_n(d)_k-\sigma_1(d)_k)\|<1/n$ for all 
$a\in F_A^{(n)}$ and all $d\in F_\D^{(n)}$.
\ee
Without loss of generality, we may assume that 
$(k_n)_{n\in\N}$ is a strictly increasing sequence.
Define a completely positive
contractive map $\sigma\colon \D\to \ell^\I(\N,A)$ 
by setting
\[\sigma(d)_k=\begin{cases}
                 0, & \mbox{if } k<k_1,\\
                 \sigma_1(d)_k, & \mbox{if } k_1\leq k< k_2;\\
                 \vdots &\\
                 \sigma_n(d)_k, & \mbox{if } k_n\leq k<k_{n+1};\\
                 \vdots
                 \end{cases}\]
for all $d\in\D$ and all $k\in\N$. 
Set $\psi=\pi_A\circ \sigma\colon \D\to A_{\I}$. 
Then $\psi$ is completely positive and contractive as well, 
and its 
range is contained in $A_{\I}\cap A'$ by condition (iii) above.
Condition (ii) implies that $\psi$ is equivariant, and thus its 
range is contained in the continuous part $A_{\I,\alpha}\cap A'$. 

We claim that 
$\kappa_A\circ\psi=\kappa_A\circ\pi_A\circ\sigma_1$. 
By continuity, given $m\in\N$, $a\in F_A^{(m)}$ and 
$d\in F_\D^{(m)}$, it suffices to check that 
\[\lim_{k\to\I} \|a(\sigma_1(d)_k-\sigma(d)_k)\|=0.\]
Let $\ep>0$, and find $n\geq m$ with $1/n<\ep$. 
Given $k\geq k_n$, let $r\in\N$ satisfy $k_r\leq k< k_{r+1}$. (Note that $r\geq n$.) 
Using that $a\in F_A^{(r)}$
and $d\in F_\D^{(r)}$, we have
\[a\sigma_1(d)_k\stackrel{\mathrm{(iv)}}{\approx}_{\frac{1}{r}} a\sigma_r(d)_k = a\sigma(d)_k.\]
We deduce that $\|a(\sigma_1(d)_k-\sigma(d)_k)\|<\ep$ for all $
k\geq k_n$. Since $\ep$ is arbitrary, this proves the 
claim.

We deduce that 
$\kappa_A\circ \psi=\kappa_A\circ\pi_A\circ\sigma_1=\Psi$, so $\kappa_A\circ\psi$
is a unital 
homomorphism. We have proved (2).

(2) implies (3): let $\ep>0$, let $F_A\subseteq A$ and $F_\D\subseteq \D$ 
be finite subsets, and let $K_G\subseteq G$ be 
a compact subset. 
Without loss of generality, we assume that 
$F_A$ consists of contractions. 
Let $\sigma\colon \D\to\ell^\I(\N,A)$ be a map as in the 
statement, and set $\varphi=\pi_A\circ\sigma$.
Since $\D$ is nuclear, the map $\varphi\colon \D\to A_{\I,\alpha}\cap A'$ is also nuclear. 
By \autoref{thm:EqChoiEffros} and
\autoref{prop:CtsPartQuotient}, 
there exists a completely
positive contractive map $\Psi\colon \D\to \ell_\alpha^\I(\N,A)$
which is $(F_\D,K_G,\ep)$-equivariant, and satisfies $\pi_A\circ \Psi=\varphi$. For $n\in\N$, let $\psi_n\colon \D\to A$ denote the 
composition of $\Psi$ with the evaluation map at $n\in\N$. 
By definition of the norm on $F_\infty(A)$, we have
\bi
\item $\lim_{n\to\I}a\psi_n(1)=a$ for all $a\in A$;
\item $\lim_{n\to\I}\|a\psi_n(d)-\psi_n(d)a\|=0$ for all $a\in A$ and all $d\in \D$;
\item $\lim_{n\to\I}\|a(\psi_n(dd')-\psi_n(d)\psi_n(d'))\|=0$ for all $a\in A$ and all $d,d'\in \D$.
\ei
Additionally, since $\Psi$ is $(F_\D,K_G,\ep)$-equivariant,
we also have
\bi
\item $\limsup_{n\to\I}\|\psi_n(\delta_g(d))-\alpha_g(\psi_n(d))\|<\ep$ for $g\in K_G$ and $d\in F_\D$.
\ei
The proof now follows by setting $\psi=\psi_n$ for $n$ large enough.
(3) implies (4): this is immediate, since any map satisfying (U), (C), (M) 
and (E) for $(F_\D, F_A, F_G\cup N, \ep)$ also satisfies the displayed
inequality in (4).

(4) implies (1): let $(F_\D^{(n)})_{n\in\N}$ and
$(F_A^{(n)})_{n\in\N}$ be increasing sequences of finite subsets 
of $\D$ and $A$, respectively, with dense union. Using that $G$ is
locally compact and second countable, let 
$(F_G^{(n)})_{n\in\N}$ be an increasing sequence of finite subsets
of $G$ whose union $G^{(0)}$ is a dense subgroup of $G$, and let 
$(N^{(n)})_{n\in\N}$ be a decreasing sequence of compact 
neighborhoods of $1_G$ which form a basis of neighborhoods
for $1_G$.
For each $n\in\N$, let 
$\psi_n\colon \D\to A$ be a 
completely positive contractive map 
which satisfies condition (U), (C), (M) and (E) 
from \autoref{nota:Abbreviations} with respect to $(F_\D^{(n)}, 
F_A^{(n)}, F_G^{(n)},1/n)$, and such that
\[\label{eqn:Continuity}\tag{3.6}
\max\limits_{h\in N^{(n)}}\|\alpha_h(\psi_n(d))-\psi_n(d)\|\leq 
\max\limits_{h\in N^{(n)}}\|\delta_h(d)-d\|+\frac{1}{n}\]
for all $d\in F_D^{(n)}$.

Let $\psi\colon \D\to
\ell^\I(\N,A)$ be given by $\psi(d)(n)=\psi_n(d)$ for all $d\in\D$
and all $n\in\N$. 
Then the image of $\psi$ is contained in $A_\I\cap A'$ 
by condition (C) in \autoref{nota:Abbreviations}. 
Set $\Psi=\kappa_A\circ \pi_A\circ\psi$. Then $\Psi$ is a unital
homomorphism by conditions (U) and (M), and it
satisfies 
\begin{equation}\label{eqn:PsiEquiv}\tag{3.7}
\Psi\circ \delta_g=F_\I(\alpha)_g\circ\Psi 
\end{equation}
for all $g\in G^{(0)}$, by condition (E).
On the other hand, for fixed $d\in \D$ the function 
$G\to F_\I(A)$ given by $g\mapsto F_\I(\alpha)_g(\Psi(d))$
is continuous at $1_G$ (and thus at every point of $G$) by
(\ref{eqn:Continuity}). 
It follows that the range of $\Psi$ is contained in
$F_{\I,\alpha}(A)$. By continuity, we deduce that the 
identity in (\ref{eqn:PsiEquiv}) holds also for every 
$g\in \overline{G^{(0)}}=G$.
In other words, $\Psi$ is equivariant.
It then follows from Theorem~3.7 in~\cite{Sza_stronglyI_2018} 
that $(A,\alpha)$ is $(\D,\delta)$-stable, as desired.

It follows that conditions (1), (2), (3) and (4) are equivalent. 

(5) implies (1): Let $(F_\D^{(n)})_{n\in\N}$ and
$(F_A^{(n)})_{n\in\N}$ be increasing sequences of finite subsets 
of $\D$ and $A$, respectively, whose unions are dense in the 
respective unit balls, and let 
$(K_G^{(n)})_{n\in\N}$ be an increasing sequence of compact subsets
of $G$ whose union equals $G$. 
For each $n\in\N$, let 
$\mu_n\colon \D\to M(A)$ and $\sigma_n\colon A\to A$ be 
homomorphisms satisfying 
conditions (5.a) through (5.d) for $F_\D^{(n)}$, $F_A^{(n)}$, 
$K_G^{(n)}$ and $\ep_n=1/n$. 

Use the unnumbered
lemma at the top of page 152 of~\cite{Kas}
to find an approximate unit $(x_n)_{n\in\N}$
for $A$ which satisfies
\bi
\item[(i)] $\|ax_{n}^{1/2}-a\|<1/n$ for all $a\in F_A^{(n)}$;
\item[(ii)] $\|x_{n}^{1/2}\mu_n(d)-\mu_n(d)x_{n}^{1/2}\|<1/n$ for all $d\in F_\D^{(n)}$;
\item[(iii)] $\|\alpha_g(x^{1/2}_{n})-x^{1/2}_{n}\|<1/n$ for all $g\in K_G^{(n)}$.
\ei
By (i), we have
\begin{equation}\tag{iv}
\|ax_{n}^{1/2}-x_{n}^{1/2}a\|<\frac{2}{n} 
\end{equation}
for all $a\in F_A^{(n)}$.
Define a map $\psi\colon \D \to \ell^\I(\N,A)$ by 
\[\psi(d)_n=x_{n}^{1/2}\mu_n(d)x_{n}^{1/2}\] 
for all $d\in \D$ and all $n\in\N$.
We claim that the range of $\pi_A\circ\psi$,
which is a priori a subset of $A_\I$, is contained
in $A_\I\cap A'$. 
Fix $n\in\N$ and let $d\in F_\D^{(n)}$ and let $a\in F_A^{(n)}$. For $m\in\N$ with $m\geq n$, we have 
\begin{align*}
\psi(d)_ma&=x_{m}^{1/2}\mu_m(d)x_{m}^{1/2}a\\
&\stackrel{\mathrm{(iv)}}{\approx_{\frac{2}{m}}} x_{m}^{1/2}\mu_m(d)ax_{m}^{1/2}\\
&\stackrel{(\mathrm{5.b})}{\approx_{\frac{1}{m}}} x_{m}^{1/2}\mu_m(d)\sigma_m(a)x_{m}^{1/2}\\
&\stackrel{(\mathrm{5.a})}{=} x_{m}^{1/2}\sigma_m(a)\mu_m(d)x_{m}^{1/2}\\
&\stackrel{(\mathrm{5.b})}{\approx_{\frac{1}{m}}} x_{m}^{1/2}a\mu_m(d)x_{m}^{1/2}\\
&\stackrel{\mathrm{(iv)}}{\approx_{\frac{1}{m}}} ax_{m}^{1/2}\mu_m(d)x_{m}^{1/2}=a\psi(d)_m.
\end{align*}
Thus $\lim_{m\to\I}\|\psi(d)_ma-a\psi(d)_m\|=0$. 
By density of the linear spans of $\bigcup_{n\in\N} F_\D^{(n)}$ and $\bigcup_{n\in\N}F_A^{(n)}$ in $\D$ and $A$, respectively, 
it follows that the 
range of $\pi_A\circ\psi$ is contained in $A_\I\cap A'$.

Set $\varphi=\kappa_A\circ \pi_A\circ \psi\colon \D\to 
F_\I(A)$. 
Then $\varphi$ is completely positive and contractive, because so is
$\psi$. We claim that $\varphi$ is a unital equivariant homomorphism. 

To check that $\varphi$ is equivariant, fix $n\in\N$ and let 
$d\in F_\D^{(n)}$, $a\in F_A^{(n)}$ and 
$g\in K_G^{(n)}$ be given. For $m\geq n$,
we have
\begin{align*}\psi(\delta_g(d))_ma&=x_{m}^{1/2}\mu_m(\delta_g(d))x_{m}^{1/2}a\\
&\stackrel{(\mathrm{iv})}{\approx_{\frac{2}{m}}}
x_{m}^{1/2}\mu_m(\delta_g(d))ax_{m}^{1/2}\\
&\stackrel{(\mathrm{5.c})}{\approx_{\frac{1}{m}}}  
x_{m}^{1/2}\alpha_g(\mu_m(d)\alpha_{g^{-1}}(a))x_{m}^{1/2}\\
&\stackrel{(\mathrm{iii})}{\approx_{\frac{1}{m}}}  
\alpha_g\big(x_{m}^{1/2}\mu_m(d)\big)ax_{m}^{1/2}\\
&\stackrel{(\mathrm{iv})}{\approx_{\frac{2}{m}}}
\alpha_g\big(x_{m}^{1/2}\mu_m(d)\big)x_{m}^{1/2}a\\
&\stackrel{(\mathrm{iii})}{\approx_{\frac{1}{m}}}
\alpha_g\big(x_{m}^{1/2}\mu_m(d)x_{m}^{1/2}\big)a\\
&=\alpha_g(\psi(d)_m)a.
\end{align*}
We deduce that $\lim_{m\to\I}\|\big(\psi(\delta_g(d))_m-\alpha_g(\psi(d)_m)\big)a\|=0$. By the definition of
$F_\I(A)$, this implies that $\varphi=\kappa_A\circ\pi_A\circ\psi$ is equivariant. 

Recall that whenever $(a_n)_{n\in\N}$ is an approximate unit in 
$A$, regarded as an element $a\in \ell^\I(\N,A)$, then 
$\kappa_A(\pi_A(a))$ is the unit of $F_\infty(A)$. Since 
\[\varphi(1)=(\kappa_A\circ\pi_A\circ\psi)(1)=\kappa_A(\pi_A((a_{n})_{n\in\N})), \]
it follows that $\varphi$ is unital. Finally, to check
that $\varphi$ is a homomorphism, fix $n\in\N$ and let 
$d,e\in F_\D^{(n)}$ and $a\in F^{(n)}_A$ be given. 
For every $m\geq n$, we have 
\begin{align*}
\psi(de)_ma&=x_{m}^{1/2}\mu_m(de)x_{m}^{1/2}a\\
&= x_{m}^{1/2}\mu_m(d)\mu_m(e)x_{m}^{1/2}a\\
&\stackrel{(\mathrm{i})}{\approx_{\frac{2}{m}}} 
x_{m}^{1/2}\mu_m(d)\mu_m(e)x_{m}^{3/2}a\\
&\stackrel{(\mathrm{ii})}{\approx_{\frac{2}{m}}} 
x_{m}^{1/2}\mu_m(d)x_{m}\mu_m(e)x_{m}^{1/2}a\\
&=\psi(d)_m\psi(e)_ma.
\end{align*}
Thus $\lim_{m\to\I}\|\psi(de)_ma-\psi(d)_m\psi(e)_ma\|=0$.
By definition of $F_\infty(A)$, this shows that 
$\varphi=\kappa_A\circ\pi_A\circ \psi$ is a homomorphism. 

Since $\varphi$ is equivariant, its image
is contained in $F_{\I,\alpha}(A)$. Thus the equivalence
between (ii) and (iii) in 
Theorem~3.7 in~\cite{Sza_stronglyI_2018} implies that
$(A,\alpha)$ is $(\D,\delta)$-stable, as desired.

Assume now that $A$ is unital; we will show that
(1) implies (5).
Let $\ep>0$, let finite subsets $F_\D \subseteq \D$, 
$F_A\subseteq A$, and let a compact subset $K_G \subseteq G$
be given. Without loss of generality, we assume that
$F_A$ and $F_\D$ contain only contractions. 
Set $\widetilde{F_A}=F_A\cup \bigcup_{g\in K_G}\alpha_g(F_A)$, 
which is a compact subset of $A$.

By the equivalence between (i) and (ii) in Theorem~3.7
in~\cite{Sza_stronglyI_2018}, the $G$-algebras
$(A,\alpha)$ and $(A\otimes\D,\alpha\otimes\delta)$
are strongly cocycle conjugate. Using Lemma~4.2
in~\cite{Sza_stronglyI_2018},
find an isomorphism $\theta_A\colon A\otimes\D\to A$ satisfying
\begin{equation}\label{eqn:3.3}\tag{3.8}
\max_{g\in K_G}\|\alpha_g\circ\theta_A -\theta_A\circ(\alpha_g\otimes\delta_g)\|<\frac{\ep}{4}.
\end{equation}

Let $F_\D'\subseteq \D$ be a finite subset containing $F_\D$ and such that
for all $a\in \widetilde{F_A}$ there exist $n\in\N$ and elements $a_1,\ldots,a_n\in A$
and $d_1,\ldots,d_n\in F_\D'$ such that 
\begin{equation}\label{eqn:FDprime}\tag{3.9}
\Big\|a-\sum_{j=1}^n \theta_A(a_j\otimes d_j)\Big\|<\frac{\ep}{4}.
\end{equation}
Let $\psi\colon (\D\otimes \D,\delta\otimes\delta)\to (\D,\delta)$
be an equivariant isomorphism such that $\psi^{-1}$ is 
$G$-approximately 
unitarily equivalent to the first factor embedding, and 
let $v\in\U(\D)$ 
be a unitary satisfying $\max_{g\in K_G}\|\delta_g(v)-v\|<\ep/8$ 
and 
$\|v\psi(d\otimes 1)v^*-d\|<\ep/8$ for all $d\in F_\D'$.
Set $\theta_\D=\Ad(v)\circ\psi$. Given $g\in K_G$, we have
\begin{align*}\label{eqn:3.1}\tag{3.10}
\|\delta_g\circ\theta_\D-\theta_\D\circ(\delta\otimes\delta)_g\|&=
\|\delta_g\circ\Ad(v)\circ \psi -\Ad(v)\circ\psi\circ 
 (\delta\otimes\delta)_g\|\\
 &=\|\delta_g\circ\Ad(v)-\Ad(v)\circ\delta_g\|<\frac{\ep}{4}.
\end{align*}
Moreover, for $d\in F_\D$ we have
\begin{equation}\label{eqn:3.2}\tag{3.11}
\theta_\D(d\otimes 1)=v\psi(d\otimes 1)v^*\approx_{\ep/8}d.
\end{equation}

Define $\sigma\colon A\to A$ by setting
\[\sigma=\theta_A\circ (\id_A\otimes\theta_\D)\circ 
(\theta_A^{-1}\otimes 1).\]
Then $\sigma$ is a homomorphism.
Fix $a\in \widetilde{F_A}$. Find $n\in\N$ and elements $a_1,\ldots,a_n\in A$
and $d_1,\ldots,d_n\in F_\D'$ satisfying the inequality in (\ref{eqn:FDprime}).
Then
\begin{align*}\label{eqn:CloseId}\tag{3.12}
\sigma(a)&=\theta_A\circ (\id_A\otimes\theta_\D)(\theta_A^{-1}(a)\otimes 1))\\
&\stackrel{(\ref{eqn:FDprime})}{\approx_{\frac{\ep}{4}}} 
\sum_{j=1}^n\theta_A\circ (\id_A\otimes\theta_\D)(a_j\otimes d_j\otimes 1)\\
&\stackrel{(\ref{eqn:3.2})}{\approx_{\frac{\ep}{4}}} 
\sum_{j=1}^n\theta_A(a_j\otimes d_j)
\stackrel{(\ref{eqn:FDprime})}{\approx_{\frac{\ep}{4}}} 
a,
\end{align*}
thus establishing condition (5.b) in the statement. 
In order to check (5.d), let $g\in K_G$ and $a\in F_A$. 
Using that $\alpha_g(a)\in \widetilde{F_A}$, we have
\[\sigma(\alpha_g(a))\stackrel{(\ref{eqn:CloseId})}{\approx_{\frac{\ep}{4}}}
 \alpha_g(a)\stackrel{(\ref{eqn:CloseId})}{\approx_{\frac{\ep}{4}}}\alpha_g(\sigma(a)),
\]
as desired.
Define $\mu\colon \D\to A$ by 
\[\mu(d)=\theta_A(1_A\otimes \theta_\D(1\otimes d))\]
for all $d\in\D$. Then $\mu$ is a unital homomorphism. 
We proceed to check
condition (5.a) in the statement. 
Given $a\in A$ and $d\in \D$, we have 
\begin{align*}
\theta_A^{-1}(\sigma(a)\mu(d))&=
\big[(\id_A\otimes\theta_\D) 
(\theta_A^{-1}(a)\otimes 1)\big]
\big[(\id_A\otimes \theta_\D) (1_A\otimes 1_\D \otimes d)\big]\\
&= (\id_A\otimes\theta_\D)\big(\underbrace{(\theta_A^{-1}(a)\otimes 1) 
(1_A\otimes 1_\D\otimes d)}_{= 
(1_A\otimes 1_\D\otimes d)(\theta_A^{-1}(a)\otimes 1)}\big)
=\theta_A^{-1}(\mu(d)\sigma(a)),
\end{align*}
as desired.
Finally, to check condition (5.c), it suffices to take $a=1$.
Let $g\in K_G$ and $d\in F_\D$. Then
\begin{align*}
\mu(\delta_g(d))&=\theta_A(1\otimes\theta_\D(1\otimes\delta_g(d)))\\
&\stackrel{(\ref{eqn:3.1})}{\approx_{\frac{\ep}{4}}} 
\theta_A(1\otimes\delta_g(\theta_\D(1\otimes d)))\\
&\stackrel{(\ref{eqn:3.3})}{\approx_{\frac{\ep}{4}}}  
\alpha_g(\theta_A(1\otimes\theta_\D(1\otimes d))\\
&=\alpha_g(\mu(d)).
\end{align*}
We conclude that $\max_{g\in K_G}\|\mu(\delta_g(d))\alpha_g(a)-\alpha_g(\mu(d))\|<\ep\|a\|$ for all $a\in A$ and all $d\in F_\D$,
as desired. This finishes the proof.
\end{proof}

When working with norm-approximations, there is usually little to no difference
between considering finite subsets or norm-compact subsets of a given C*-algebra.
In particular, the sets $F_\D$ and $F_A$ in conditions (3), (4) and (5) of 
the previous theorem could equivalently be chosen to be compact. 
For the subset of $G$, the situation is a bit more subtle: one may 
replace the compact subset $K_G$ with a finite one, at the price of having
to control the continuity moduli on neighborhoods of the unit of $G$.
We included condition (4) in the above theorem because in practice
one is often only able to get approximations on a finite subset of $G$.
For example, most of the arguments one performs in the sequence
algebra will only be able to keep track of finitely many group elements at a
time. In such situations, it suffices to verify (almost) equivariance on a
finite subset as long as we have control over the relevant continuity moduli.

For future reference, we end this section with some comments about 
conditions (U) and (M) in the unital case.

\begin{rem}\label{rem:UnitalUCME}
If $A$ is unital, then the witness $a\in F_A$
in part~(3) of \autoref{thm:CharactDabs} 
is unnecessary in conditions (U) and (M) (see \autoref{nota:Abbreviations}); and they become 
\[ \mathrm{(U)} \ \  \|\psi(1)-1\|<\ep \ \ \ \ \ \ \ \ \ \ 
\ \ \ \ \ \ \ \mathrm{(M)}\ \ \|\psi(dd')-\psi(d)\psi(d')\|<\ep.\]
If, in addition, $A$ admits an $\alpha$-invariant
state, then condition (U) can be replaced by $\psi(1)=1$; see the comments after \autoref{thm:EqChoiEffros}.
\end{rem}



\section{Equivariant continuous bundles and absorption}
In this section, we study continuous bundles of $G$-algebras, 
also known as $G$-$C_0(X)$-algebras; see \autoref{df:GXalgebra}.
We are mostly interested in determining when such a $G$-$C_0(X)$-algebra
absorbs a given strongly self-absorbing $G$-algebra $(\D,\delta)$.
By Corollary~2.8 in~\cite{Sza_stronglyII_2018}, 
a necessary condition is that the fibers of the bundle 
absorb $(\D,\delta)$. When the base space $X$ has finite covering 
dimension and $(\D,\delta)$ satisfies a mild technical condition called
\emph{unitary regularity}, we show that the converse also holds. 
Hence, continuous bundles of $(\D,\delta)$-absorbing $G$-algebras
are themselves $(\D,\delta)$-absorbing. 
This is an equivariant version of Theorem~4.6 in~\cite{HirRorWin_algebras_2007}, 
and our proof depends crucially on the characterization
of $(\D,\delta)$-absorption obtained in Section~2.

The above mentioned result is known to fail if $X$ has infinite
covering dimension, already in the non-equivariant setting (that is,
for the trivial actions on $A$ and $\D$); see Example~4.8 
in~\cite{HirRorWin_algebras_2007}. 
However, even when $X$ is infinite-dimensional, if $(A,\alpha)$ is \emph{locally} $(\D,\delta)$-absorbing,
then $(A,\alpha)$ is $(\D,\delta)$-absorbing as well; see the 
comments at the end of this section.

We begin by introducing some necessary definitions. 
For a \ca\ $A$, we denote by $Z(A)$ its center.

\begin{df}\label{df:GXalgebra}
Let $A$ be a $C^*$-algebra and let $X$ be a locally 
compact Hausdorff space. We say that $A$ is a \emph{$C_0(X)$-algebra}, 
if there is a non-degenerate homomorphism $\rho \colon C_0(X)\to Z(M(A))$.
In order to lighten the notation, we usually omit $\rho$ and 
denote $\rho(f)(a)$ by $f\cdot a$ for $f \in C_0(X)$ and $a \in A$.

If $G$ is a locally compact group and $(A,\alpha)$ is a $G$-algebra,
then we say that it is a \emph{$G$-$C_0(X)$-algebra} if 
$C_0(X)\subseteq M(A)^G$.
\end{df}

In practice, we think of $G$-$C_0(X)$-algebras as 
being bundles over the base space $X$, where each fiber is naturally
a $G$-algebra. This is made precise as follows: 

\begin{nota}\label{nota:QuotientCXalg}
Let $G$ be a locally compact group, let $X$ be a locally compact
Hausdorff space, and let 
$(A,\alpha)$ be a $G$-$C_0(X)$-algebra. For a closed subset 
$Y \subseteq X$, we set 
$J_{Y}=  C_{0}(X\setminus Y) \cdot A$, which
is a $G$-invariant ideal in $A$. We let $A_Y$ denote the 
corresponding quotient, with quotient map $\pi_Y\colon A\to A_Y$.
Then $\alpha$ naturally induces an action $\alpha^Y$ on $A_Y$, 
and $\pi_Y$ is equivariant. Moreover, $(A_Y,\alpha^{Y})$ can be 
regarded either as a $G$-$C_0(X)$-algebra or as a 
$G$-$C_0(Y)$-algebra in a natural way.
\end{nota}

\begin{rem}\label{rem:FibersNorm}
When $Y$ is a singleton set $Y=\{x\}$, we abbreviate 
$A_Y$ to $A_{x}$ and $\alpha^Y$ to $\alpha^{(x)}$, and we call 
the $G$-algebra $(A_x,\alpha^{(x)})$ the \emph{fiber} of $(A,\alpha)$ over $x$.
For $a\in A$, we write $a_x$ for $\pi_x(a)$, and observe that
\[\|a\|=\sup_{x\in X}\|a_x\|.\]
Moreover, the function $x\mapsto \|a_x\|$ is upper semicontinuous, which
means that for every $\ep>0$, the set $\{x\in X\colon \|a_x\|<\ep\}$
is open in $X$. 
\end{rem}

We will prove \autoref{thm:FiberToBundleHRW} using
the new characterization of $(\D,\delta)$-absorption obtained 
in~\autoref{thm:CharactDabs}.
In particular, we will construct completely positive
contractive maps $\D\to A$ satisfying the conditions in part~(3) of 
said theorem. In doing so, we will first produce, for a closed subset
$Y\subseteq X$, maps $\D\to A$ which
satisfy said conditions \emph{when composed} with
the quotient map $\pi_Y\colon A\to A_Y$.
We isolate this notion in the following auxiliary definition
(see \autoref{nota:Abbreviations}).

\begin{df}\label{df:good}
Let $G$ be a second countable, locally compact group, 
let $(\D,\delta)$ be a strongly self-absorbing $G$-algebra, 
let $X$ be a locally compact Hausdorff space,
and let $(A, \alpha)$ be a separable $G$-$C_0(X)$-algebra. 
Let finite sets $F_\D\subseteq \D$ and $F_A \subseteq A$, and a compact subset $K_G\subseteq G$ be given. 
For a closed subset $Y\subseteq X$ and 
$\ep>0$, 
a completely positive contractive map $\psi\colon \D\to A$
is said to be \emph{$(F_\D, F_A, K_G, \varepsilon)$-regular 
for $Y$} if for every $y\in Y$, the composition
$\pi_y\circ \psi$ satisfies conditions (U), (C), (M) and 
(E) with respect to $(F_\D,\pi_y(F_A),K_G,\ep)$.
\end{df}

The above definiton is an adaptation to the equivariant setting of
Definition~4.2 in~\cite{HirRorWin_algebras_2007}, where 
condition (E) from \autoref{nota:Abbreviations} is incorporated
to bound the failure of equivariance for $\psi$ for $(F_\D,K_G)$.
We do not need to control the continuity modulus of $\alpha$ on
$\psi(F_\D)$, but for this it is necessary that we work with 
compact subsets of $G$, as opposed to just finite subsets.
Alternatively, if we worked instead with a finite subset $F_G\subseteq G$
and required condition (E) to hold for $(F_\D,F_G)$, we would
need to add a continuity-type condition over a compact 
neighborhood of the identity in $G$ along the lines of the 
displayed inequality in part~(4) of \autoref{thm:CharactDabs}.


\begin{rem}\label{rem:SufficesGoodAllX}
In the context of \autoref{df:good}, by \autoref{rem:FibersNorm}
a map $\psi\colon \D\to A$ is $(F_\D, F_A, K_G, \varepsilon)$-regular 
for all of $X$ if and only if it satisfies conditions (U), (C), 
(M), and (E) with respect to $(F_\D, F_A, K_G, \varepsilon)$.
In particular, it follows from
\autoref{thm:CharactDabs} that
$(A,\alpha)$ is $(\D,\delta)$-absorbing if and only if 
for all choices of $(F_\D, F_A, K_G, \varepsilon)$, 
there exists a $(F_\D, F_A, K_G, \varepsilon)$-regular map for
all of $X$.
\end{rem}

We begin by establishing the existence of regular maps in the 
presence of $(\D,\delta)$-absorbing fibers.

\begin{lma}\label{cor:key1}
Let the assumptions and notation be as in \autoref{df:good},
and suppose that there is $x\in X$ such that 
$(A_x,\alpha^{(x)})$ is $(\D,\delta)$-absorbing. 
Then there exist a compact neighborhood $Y$ of $x$
and a completely positive map $\psi\colon \D\to A$ which is 
$(F_\D, F_A,K_G,\ep)$-regular for $Y$.
\end{lma}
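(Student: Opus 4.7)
My approach is to combine the local characterization of $(\D,\delta)$-absorption proved in \autoref{thm:CharactDabs} with the quantitative lifting theorem \autoref{thm:EqChoiEffros} at the single fiber over $x$, and then to transport the resulting estimates to a whole neighborhood of $x$ by exploiting the upper semicontinuity of the fiberwise norms recorded in \autoref{rem:FibersNorm}.

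Concretely, since $(A_x,\alpha^{(x)})$ is a separable $(\D,\delta)$-stable $G$-algebra, applying part~(3) of \autoref{thm:CharactDabs} with input $(F_\D,\pi_x(F_A),K_G,\ep/2)$ yields a completely positive contractive map $\varphi\colon\D\to A_x$ that satisfies the four conditions (U), (C), (M), and (E) with tolerance $\ep/2$. Because $\D$ is nuclear, $\varphi$ is nuclear, so \autoref{thm:EqChoiEffros} (applied to the equivariant quotient map $\pi_x\colon A\to A_x$, with slack parameter $\ep/2$) furnishes a completely positive contractive lift $\psi\colon\D\to A$ with $\pi_x\circ\psi=\varphi$ and equivariance modulus on $(F_\D,K_G)$ bounded above by that of $\varphi$ plus $\ep/2$; in particular,
\[\|\alpha_g(\psi(d))-\psi(\delta_g(d))\|<\ep\qquad\text{for all }g\in K_G,\ d\in F_\D.\]
Since $\pi_y$ is a contractive $\ast$-homomorphism for every $y\in X$, this global bound immediately forces $\pi_y\circ\psi$ to satisfy condition (E) at every point $y\in X$.

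It remains to propagate (U), (C), and (M) to a neighborhood of $x$. For each admissible choice of $a\in F_A$ and $d,d'\in F_\D$, consider the elements
\[\psi(1)a-a,\qquad a\psi(d)-\psi(d)a,\qquad a\bigl(\psi(dd')-\psi(d)\psi(d')\bigr)\]
of $A$. Since $\pi_x\circ\psi=\varphi$ and $\varphi$ satisfies (U), (C), (M) with tolerance $\ep/2$, the norm of each such element in the fiber over $x$ is strictly less than $\ep/2$. By \autoref{rem:FibersNorm}, each of the maps $y\mapsto\|c_y\|$ is upper semicontinuous, so the set of $y\in X$ at which all of these finitely many fiberwise norms stay strictly below $\ep$ is an open neighborhood $U$ of $x$. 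Choosing any compact neighborhood $Y\subseteq U$ of $x$ then guarantees that $\pi_y\circ\psi$ satisfies (U), (C), and (M) with tolerance $\ep$ for every $y\in Y$, which, together with the global (E) estimate already established, shows that $\psi$ is $(F_\D,F_A,K_G,\ep)$-regular for $Y$.

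The one genuinely delicate step is the passage from $A_x$ back to $A$: one must lift an almost-equivariant completely positive map on the quotient without losing the equivariance estimate, and this is exactly what the quantitative bound of \autoref{thm:EqChoiEffros} provides. Once that control is in hand, the remainder of the argument is a standard compactness-and-upper-semicontinuity routine applied to finitely many norm inequalities.
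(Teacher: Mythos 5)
Your proof is correct and follows essentially the same route as the paper: apply the local characterization of \autoref{thm:CharactDabs} at the fiber over $x$, lift via \autoref{thm:EqChoiEffros}, and propagate the finitely many norm inequalities to a compact neighborhood using upper semicontinuity. Your handling of condition (E) via the global equivariance bound of the lift (with the $\ep/2$ bookkeeping) is in fact slightly tidier than the paper's appeal to upper semicontinuity, since it sidesteps the need for a compactness argument over $K_G$.
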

\begin{proof}
Since $(A_x,\alpha^{(x)})$ is $(\D,\delta)$-absorbing, it follows
from \autoref{thm:CharactDabs} that there exists a completely
positive contractive map 
$\varphi\colon \D \to A_x$ which satisfies conditions (U), (C), 
(M), and (E) with respect to the tuple
$(F_\D,\pi_x(F_A), K_G, \varepsilon)$. The map $\varphi$ is 
automatically nuclear, since so is $\D$.
Let $\psi\colon \D\to A$ be a lift of $\varphi$ as in the 
conclusion of \autoref{thm:EqChoiEffros} for the tuple
$(F_\D, K_G,\ep)$.
It is then immediate to check that $\psi$ is
$(F_\D, F_A,K_G,\ep)$-regular for $\{x\}$. By upper
semicontinuity of the norm function 
(see \autoref{nota:QuotientCXalg}), it follows that $\psi$
is regular not just for $\{x\}$, but for a neighborhood of it. 
The result then follows by local compactness of $X$.
\end{proof}

Our strategy to prove~\autoref{thm:FiberToBundleHRW} for 
$G$-$C(X)$-algebras is inspired by the arguments used in 
\cite{HirRorWin_algebras_2007}, and consists in 
``gluing''  
the regular maps obtained in~\autoref{cor:key1} to get a map 
which is regular for all of $X$. In general, there may be topological
obstructions to this gluing if the boundaries of the sets over which one 
wishes to glue have complicated homotopy groups. 
We will thus make some 
simplifications, which we proceed to explain.

A standard direct limit argument (see the beginning of the proof
of \autoref{thm:FiberToBundleHRW}) will allow us to assume that 
the base space $X$ is compact. In this case, 
if $X$ embeds homeomorphically 
into another compact space $Z$, then any $G$-$C(X)$-algebra 
$(A,\alpha)$ is naturally a 
$G$-$C(Z)$-algebra, by composing the original structure map with 
the quotient map $C(Z)\to C(X)$. With this structure, the fiber
of $(A,\alpha)$ over a point in $X$ (regarded as a point in $Z$) 
is the original one, and the 
fiber over a point in $Z\setminus X$ is the zero $G$-algebra. In
particular, all the $Z$-fibers are $(\D,\delta)$-stable, and 
are unital and separable if all the $X$-fibers are. 
Note, however, 
that $A$ is not in general a \emph{continuous} $C(Z)$-algebra, 
even if it is a continuous $C(X)$-algebra.

When $X$ is compact and finite-dimensional, then by Theorem~V.3 
in~\cite{HurWil_dimension_1941}, there 
exists $m\in\N$ such that $X$ embeds homeomorphically into the 
cube $[0,1]^m$. 
Using induction, we will see that the heart of the argument 
lies in establishing the case $m=1$. For $G$-$C([0,1])$-algebras,
the following implies that 
one only needs to worry about gluing regular maps over a single point. 

\begin{lma}\label{key1}
Let $G$ be a locally compact group, 
let $(\D,\delta)$ be a strongly self-absorbing $G$-algebra, 
let $(A, \alpha)$ be a $G$-$C([0,1])$-algebra such that 
$(A_t,\alpha^{(t)})$ is $(\D,\delta)$-absorbing for all $t\in [0,1]$, 
and let a tuple $(F_\D, F_A,K_G,\ep)$ as in \autoref{df:good}
be given. 
Then there exist $n\in\N$, points 
$0=t_{0} < t_{1} < \cdots <t_{n}=1$
and completely positive contractive maps 
$\psi_{1},\dots, \psi_{n} \colon \D\to A$ such that $\psi_{j}$ is 
$(F_\D, F_A,K_G,\ep)$-regular for 
$[t_{j-1}, t_{j}]$ for all $j=1,\ldots,n$.\end{lma}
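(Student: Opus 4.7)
The plan is to prove this by a straightforward compactness argument, reducing the situation to finitely many pointwise applications of the preceding lemma.

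The first step is to observe that $(F_\D,F_A,K_G,\ep)$-regularity for a closed set $Y\subseteq [0,1]$ is, by \autoref{df:good}, a \emph{pointwise} condition (i.e.\ a condition imposed at every $y\in Y$). In particular, regularity for $Y$ immediately implies regularity for every closed subset $Y'\subseteq Y$. This monotonicity is what will allow us to pass from a cover by compact neighborhoods to a partition by closed subintervals.

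Next, I would apply \autoref{cor:key1} pointwise: for each $t\in[0,1]$, the fiber $(A_t,\alpha^{(t)})$ is $(\D,\delta)$-absorbing, so there exist a compact neighborhood $Y_t\subseteq[0,1]$ of $t$ and a completely positive contractive map $\psi^{(t)}\colon\D\to A$ that is $(F_\D,F_A,K_G,\ep)$-regular for $Y_t$. Let $U_t$ denote the interior of $Y_t$ in $[0,1]$, so that $t\in U_t$. Then $\{U_t\}_{t\in[0,1]}$ is an open cover of the compact space $[0,1]$, and by the Lebesgue number lemma there exists $\eta>0$ such that every subset of $[0,1]$ of diameter less than $\eta$ is contained in some $U_{t}$, and hence in some $Y_t$.

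Now choose $n\in\N$ with $1/n<\eta$ and set $t_j=j/n$ for $0\le j\le n$. For each $j\in\{1,\dots,n\}$, the closed interval $[t_{j-1},t_j]$ has diameter $1/n<\eta$, so there exists some point $\tau_j\in[0,1]$ with $[t_{j-1},t_j]\subseteq Y_{\tau_j}$. Define $\psi_j=\psi^{(\tau_j)}$. Since $\psi^{(\tau_j)}$ is regular for $Y_{\tau_j}$, the monotonicity observed in the first paragraph gives that $\psi_j$ is $(F_\D,F_A,K_G,\ep)$-regular for $[t_{j-1},t_j]$, which is exactly the conclusion of the lemma. No real obstacle arises; the only point requiring minor care is the monotonicity of regularity under passing to subsets, which is transparent from the definition.
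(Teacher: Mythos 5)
Your argument is correct and is essentially identical to the paper's proof, which also applies \autoref{cor:key1} at each point to obtain regular maps on compact neighborhoods and then concludes by compactness of $[0,1]$; you have merely made the compactness step explicit via the Lebesgue number lemma. Nothing further is needed.
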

\begin{proof} 
For every $t\in [0,1]$, use \autoref{cor:key1} to find a closed interval 
$Y_t$ continaing $t$ in its interior, and 
a map $\psi_t\colon \D\to A$ which is $(F_\D, F_A,K_G,\ep)$-regular 
for $Y_t$. Since the interiors of the $Y_t$ cover $[0,1]$, the result 
follows by compactness.
\end{proof}

Gluing the regular maps obtained above will require better 
control over the norm estimates in (U), (C), (M), and (E) from 
\autoref{df:good} over the endpoints;
see 
the proof of \autoref{key3}.
The following notion will be helpful in this task.

\begin{df}\label{df:better}
Let $G$ be a locally compact group, 
let $(\D,\delta)$ be a strongly self-absorbing $G$-algebra, 
let $(A, \alpha)$ be a unital, separable $G$-$C([0,1])$-algebra, and let 
$Y$ be a closed subinterval in $[0,1]$. 
Let finite subsets $F_\D\subseteq F_\D'\subseteq \D$, 
$F_A\subseteq A$, a compact subset
$K_G\subseteq G$, 
and tolerances $0<\varepsilon'< \varepsilon$ be given. 
We say that a completely positive
contractive map $\psi\colon \D\to A$ is 
\emph{$(F_\D, F_A, K_G, \varepsilon, F_\D', \varepsilon')$-regular for $Y$}, if $\psi$ is 
$(F_\D, F_A, K_G,\varepsilon)$-regular for $Y$, and there is a 
compact neighborhood of the endpoints of $Y$
for which $\psi$ is $(F_\D',F_A, K_G,\varepsilon')$-regular.
\end{df}

In the context of the above definition, and since $A$ is assumed
to be unital, following \autoref{rem:UnitalUCME} we will use
conditions (U) and (M) without the multiplicative witness $a\in F_A$. 

The gluing procedure explained after \autoref{cor:key1} will be
accomplished in the following two lemmas. In the first one, 
we show that, given two regular maps $\psi_1$ and $\psi_2$ that 
we wish to glue at $s\in [0,1]$, we can find two other regular maps $\nu_1$ and 
$\nu_2$, such that, one localized at $s$, the maps $\psi_j$ and $\nu_j$ approximately commute
for $j=1,2$, and $\nu_1$ is close to $\nu_2$. 

We follow the arguments in~\cite{HirRorWin_algebras_2007}, 
but stress the fact that working in the equivariant setting makes 
the arguments necessarily more technical, since we must keep 
track of equivariance
and (at least implicitly) continuity. 
We omit those computations similar to 
the ones in \cite{HirRorWin_algebras_2007}, and give details for 
everything related to the actions.


\begin{lma}\label{key2}
Let the notation and assumptions be as in \autoref{df:better}.
Let $r,s,t\in [0,1]$ with $r<s<t$, and let 
$\psi_1,\psi_2 \colon \D \to A$ 
be completely positive contractive maps which are 
$(F_\D, F_A, K_G,\ep)$-regular for $[r,s]$ and $[s,t]$, 
respectively. If $(A_{s}, \alpha^{(s)})$ is 
$(\D , \delta)$-stable, 
then there exist completely positive contractive maps 
\[\varphi_1,\varphi_2,\nu_1, \nu_2 \colon \D \to A \ \mbox{ and } \ \mu_1, \mu_2 \colon \D  \otimes \D \to A\] 
such that the following are satisfied for $j=1,2$ and $d,d'\in F_\D$:
\be
\item $\varphi_1$ and $\varphi_2$ are $(F_\D, F_A, K_G,\ep)$-regular for $[r,s]$ and $[s,t]$, respectively;
\item $\nu_j$ is $(F_\D, F_A, K_G,3\ep)$-regular for $\{s\}$;
\item $\|\varphi_j(d)_s\nu_j(d')_s-\nu_j(d')_s\varphi_j(d)_s\| < 2\ep$;
\item $\|\varphi_j(d)_s \nu_j(d')_s-\mu_j(d \otimes d'))_s\| <\ep$;
\item $\|\alpha_g^{(s)}(\mu_j(d\otimes d')_s)-
\mu_j(\delta_g(d)\otimes\delta_g(d'))_s\|<\ep$ for all $g\in K_G$;
\item $\|\nu_1(d)_s - \nu_2(d)_s\| < 2 \ep$.
\ee

If $\psi_1$ and $\psi_2$ are 
$(F_\D, F_A, K_G,\ep,F_\D',\ep')$-regular for $[r,s]$ and $[s,t]$, respectively, 
then the same can be arranged for $\varphi_1$ and $\varphi_2$. Moreover,
one can arrange that $\nu_1$ and $\nu_2$ are $(F_\D',F_A, K_G, 3\ep')$-regular for $\{s\}$, and that
conditions (3) through (6) above hold if one replaces 
$F_\D$ with $F_\D'$ and $\ep$ with $\ep'$. 
\end{lma}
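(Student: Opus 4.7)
The plan is to follow the non-equivariant strategy of Lemma~4.4 in~\cite{HirRorWin_algebras_2007}, adapted to the equivariant setting by exploiting the fiberwise characterization of $(\D,\delta)$-stability from \autoref{thm:CharactDabs}. The main idea is to produce $\widetilde{\nu}, \sigma^{(s)}$ in the $(\D,\delta)$-stable unital fiber $(A_s,\alpha^{(s)})$ using part~(5) of that theorem, then to set $\varphi_j=\psi_j$, define $\nu_j$ and $\mu_j$ fiberwise at $s$, and finally lift to $A$ via \autoref{thm:EqChoiEffros}. The key feature of part~(5), as opposed to part~(3), is that it provides a \emph{genuine} unital $\ast$-homomorphism, which is essential for constructing the tensor-product map $\mu_j$ as a CPC map on all of $\D\otimes\D$.

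First, fix a sufficiently small auxiliary tolerance $\delta>0$ (to be determined in terms of $\ep$). Apply the implication (1)$\Rightarrow$(5) of \autoref{thm:CharactDabs} to $(A_s,\alpha^{(s)})$ with the finite sets $F_\D$, $K_G$, tolerance $\delta$, and the finite set $F_A^{(s)}:=\pi_s(F_A\cup\psi_1(F_\D)\cup\psi_2(F_\D))\subseteq A_s$. This yields unital $\ast$-homomorphisms $\widetilde{\nu}\colon\D\to A_s$ and $\sigma^{(s)}\colon A_s\to A_s$ with commuting ranges, with $\sigma^{(s)}$ $\delta$-close to the identity on $F_A^{(s)}$, approximately equivariant, and with $\widetilde{\nu}$ approximately equivariant (the latter obtained from condition~(5.c) by taking $a=1$). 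Set $\varphi_1=\psi_1$ and $\varphi_2=\psi_2$ (so (1) is automatic); use \autoref{thm:EqChoiEffros} to lift $\widetilde{\nu}$ to a CPC map $\nu\colon\D\to A$ with $\pi_s\circ\nu=\widetilde{\nu}$ and equivariance modulus bounded by $2\delta$; and set $\nu_1=\nu_2:=\nu$, so (6) holds trivially. To construct $\mu_j$, let $\widetilde{\varphi}_j:=\sigma^{(s)}\circ\pi_s\circ\psi_j\colon\D\to A_s$, which is CPC with image commuting with $\widetilde{\nu}(\D)$. Since $\widetilde{\nu}$ is a genuine $\ast$-homomorphism and $\D$ is nuclear, the map $\widetilde{\varphi}_j\otimes\widetilde{\nu}\colon\D\otimes\D\to A_s\otimes A_s$ is CPC, and post-composing with the multiplication map, which restricted to the commuting subalgebra $\widetilde{\varphi}_j(\D)\cdot\widetilde{\nu}(\D)\subseteq A_s$ is a $\ast$-homomorphism, produces a CPC map $\widetilde{\mu}_j\colon\D\otimes\D\to A_s$ satisfying $\widetilde{\mu}_j(d\otimes d')=\widetilde{\varphi}_j(d)\,\widetilde{\nu}(d')$. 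Finally, lift $\widetilde{\mu}_j$ to $\mu_j\colon\D\otimes\D\to A$ via \autoref{thm:EqChoiEffros}.

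The verification of conditions (2)--(5) reduces to norm estimates at the fiber $s$. Condition (3) follows from $\psi_j(d)_s\approx_\delta\sigma^{(s)}(\psi_j(d)_s)$ and the commutation of $\sigma^{(s)}(A_s)$ with $\widetilde{\nu}(\D)$, yielding bound $2\delta$. Condition (4) is immediate from $\widetilde{\varphi}_j(d)\approx_\delta\psi_j(d)_s$. Condition (2) holds because $\widetilde{\nu}$ is a genuine unital $\ast$-homomorphism, is approximately central (by combining $\sigma^{(s)}(a)\approx a$ with the $\sigma^{(s)}$-$\widetilde{\nu}$ commutation), and approximately equivariant. The most delicate verification is the equivariance condition (5): combining the approximate equivariance of $\widetilde{\nu}$, the approximate commutation of $\sigma^{(s)}$ with $\alpha^{(s)}$, the approximate equivariance of $\psi_j|_s$ (inherited from the $\ep$-regularity of $\psi_j$), and the lifting error, one obtains the stated bound provided $\delta$ is chosen small enough relative to $\ep$. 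The moreover statement is handled by running the same construction with the refined data $(F_\D',\ep')$ in place of $(F_\D,\ep)$; since $\varphi_j=\psi_j$, the regularity of $\varphi_j$ on a neighborhood of the endpoints is inherited from that of $\psi_j$ at no extra cost. The main technical point is that the tensor-product construction of $\widetilde{\mu}_j$ fundamentally requires $\widetilde{\nu}$ to be a genuine $\ast$-homomorphism, so the unital case of \autoref{thm:CharactDabs} (which supplies equivalence with part~(5)) is essential here.
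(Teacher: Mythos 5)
Your proposal is correct and runs on the same engine as the paper's proof: apply part~(5) of \autoref{thm:CharactDabs} to the unital fiber $(A_s,\alpha^{(s)})$ to get a genuinely commuting pair of homomorphisms $\mu$ (your $\widetilde{\nu}$) and $\sigma$, use the exact commutation to build the tensor map $\D\otimes\D\to A_s$, and lift everything with \autoref{thm:EqChoiEffros} while controlling equivariance moduli. The one structural difference is that the paper does not take $\varphi_j=\psi_j$: it lifts $\sigma\circ\pi_s\circ\psi_j$ to a map $\widetilde{\psi}_j$ and interpolates, $\varphi_j=(1-f_j)\cdot\psi_j+f_j\cdot\widetilde{\psi}_j$ for a bump function $f_j$ supported near $s$, so that $\pi_s\circ\varphi_j$ lands \emph{exactly} in $\sigma(A_s)$ and hence $\mu_j(d\otimes 1)_s=\varphi_j(d)_s$ on the nose. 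Your version, with $\mu_j(d\otimes d')_s=\sigma^{(s)}(\psi_j(d)_s)\widetilde{\nu}(d')$, only gives this compatibility up to the auxiliary tolerance; that still satisfies conditions (3) and (4) as stated (and your $\nu_j$ is even better, being an exact unital homomorphism at $s$), so it is a valid proof of the lemma, but the paper's exact identity is what makes Case~I of the gluing argument in \autoref{key3} (where $\theta_j$ must agree with $\varphi_j$ near the boundary of the interpolation region) go through cleanly, which is why the interpolation step is there. One small point you should make explicit: in condition~(5) the error budget contains the equivariance defect of $\psi_j$ at $s$, which a priori is only $<\ep$ with no room left for the $O(\delta)$ terms coming from $\sigma^{(s)}$ and $\widetilde{\nu}$; you must first use finiteness of $F_\D$, compactness of $K_G$, and upper semicontinuity of the fiberwise norms to find $\ep_0<\ep$ witnessing the regularity of $\psi_1,\psi_2$, and only then choose $\delta<(\ep-\ep_0)/2$ (this is exactly the paper's opening move). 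The same remark applies to the refined data $(F_\D',\ep')$ in the moreover part.
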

\begin{proof}
Without loss of generality, we assume that $F_\D$ and $F_A$ consist
of positive contractions, that $F_\D$ contains the unit of $\D$, that $F_A$ contains the unit of $A$,
and that $K_G$ contains the unit of $G$. 
Using that $F_A$ and $F_\D$ are finite, and that $K_G$ is compact, find 
$0< \ep_0 < \ep$ such that 
$\psi_1$ and $\psi_2$ are $(F_\D, F_A, K_G,\ep_0)$-regular 
for $[r,s]$ and $[s,t]$, respectively. Explicitly, with
$I_1=[r,s]$ and $I_2=[s,t]$, for $j=1,2$ we have (see \autoref{rem:UnitalUCME}):
\be
\item[(U)$_j$] $\|\psi_j(1)_x-1_x\|<\ep_0$ for all $x\in I_j$
\item[(C)$_j$] $\|a_x\psi_j(d)_x-\psi_j(d)_xa_x\|<\ep_0$ for all $a\in F_A$, $d \in F_\D$ and $x\in I_j$;
\item[(M)$_j$] $\|\psi_j(dd')_x-\psi_j(d)_x\psi_j(d')_x\|<\ep_0$
for all $d,d'\in F_\D$ and $x\in I_j$;
\item[(E)$_j$] $\max\limits_{g\in K_G}\|\alpha^{(x)}_g(\psi_j(d)_x)-\psi_j(\delta_g(d))_x\|<\ep_0$
for $d\in F_\D$ and $x\in I_j$.
\ee
Equivalently, $\pi_x\circ\psi_j$ satisfies (U), (C), (M)
and (E) with respect to $(F_\D, F_A, K_G,\ep_0)$. 
Set $\widetilde{\ep}= (\ep-\ep_0)/6$, set $\widetilde{F_\D}=\bigcup_{g\in K_G}\delta_g(F_\D)$, which is a compact subset
of $\D$, and set
\[\widetilde{F_A}=F_A\cup \psi_1(\widetilde{F_\D})\cup \psi_1(\widetilde{F_\D})\psi_1(\widetilde{F_\D})\cup \psi_2(\widetilde{F_\D})\cup \psi_2(\widetilde{F_\D})\psi_2(\widetilde{F_\D}),\]
which is a compact subset of $A$. 
Since 
$(A_{s}, \alpha^{(s)})$ is $(\D, \delta)$-stable and unital, 
it satisfies the conditions in part~(5) of \autoref{thm:CharactDabs}.
We thus fix homomorphisms
$\mu\colon \D \to A_s$ and $\sigma\colon A_s\to A_s$
with $\mu(1)=1$, satisfying:
\be
\item[(5.a)] $\sigma(a_s)\mu(d)=\mu(d)\sigma(a_s)$ for all $d\in\D$
and all $a\in A$;
\item[(5.b)] $\|\sigma(a_s)-a_s\|<\widetilde{\ep}$ for all $a\in \widetilde{F_A}$;
\item[(5.c)] $\max\limits_{g\in K_G}\|\alpha_g^{(s)}(\mu(d))-\mu(\delta_g(d))\|<\widetilde{\ep}$ for $d\in F_\D$; and 
\item[(5.d)] $\max\limits_{g\in K_G}\|\sigma(\alpha^{(s)}_g(a_s))-\alpha^{(s)}_g(\sigma(a_s))\|<\widetilde{\ep}$ for all $a\in \widetilde{F_A}$.
\ee

Fix $j=1,2$.
Using (twice) that $\sigma\circ\pi_s\approx_{\widetilde{\ep}}\pi_s$
on $F_A\cup \psi_j(F_\D)$ by condition (5.b) above, one easily shows that 
$\sigma\circ\pi_s\circ\psi_j$
satisfies conditions analogous to (U)$_j$, (C)$_j$, (M)$_j$, and
(E)$_j$, with $\ep_0+2\widetilde{\ep}$ 
instead of $\ep_0$. We check this for (E)$_j$, and leave the other ones
to the reader. Let $d\in F_\D$ and let $g\in K_G$. Using that
$\delta_g(d)\in \widetilde{F_\D}$ at the fourth step, we get
\begin{align*}
\alpha^{(s)}_g((\sigma\circ\pi_s\circ\psi_j)(d))
&=\alpha^{(s)}_g(\sigma(\psi_j(d)_s))
\stackrel{(5.\mathrm{b})}{\approx_{\widetilde{\ep}}}
\alpha^{(s)}_g(\psi_j(d)_s)
\stackrel{\mathrm{(E)}_j}{\approx_{\ep_0}}\psi_j(\delta_g(d))_s\\
&\stackrel{(5.\mathrm{b})}{\approx_{\widetilde{\ep}}}
\sigma(\psi_j(\delta_g(d))_s)
=(\sigma\circ\pi_s\circ\psi_j)(\delta_g(d)),
\end{align*}
as desired.
By upper semicontinuity of the norm,
there exists $l>0$ with $[s-l,s+l]\subseteq I_1\cup I_2$ 
such that $\sigma\circ\pi_x\circ\psi_j$ satisfies
similar estimates 
for all $x\in [s-l,s+l]$. 
Let $\widetilde{\psi}_j\colon \D\to A$
be a completely positive contractive lift of 
$\sigma\circ\pi_s\circ\psi_j$. Then $\widetilde{\psi}_j$
is $(F_\D, F_A,K_G, \ep_0+2\widetilde{\ep})$-regular for $[s-l,s+l]$.
Moreover, for all $d\in F_\D$ we have 
\begin{equation}\label{widtildesigma}\tag{4.1}
\widetilde{\psi}_j(d)_s=\sigma(\psi_j(d)_s)\stackrel{\mathrm{(5.b)}}{\approx_{\widetilde{\ep}}} \psi_j(d)_s,
\end{equation}
so by reducing $l$ we may also assume that $\|\widetilde{\psi}_j(d)_x-\psi_j(d)_x\|<\widetilde{\ep}$ for all $d\in F_\D$ and all $x\in [s-l,s+l]$.
Let $f_1\colon \mathbb{R}\to [0,1]$ be given by the following graph:
\begin{center}\begin{tikzpicture}
\draw[->] (0,0)--(5.5,0) node[anchor=east]{};
{
	\fill (0,0) circle (1pt) node [below=5pt] {0};
}
{
	\fill (1.5,0) circle (1pt) node [below=5pt] {$s-l$};
}
{
	\fill (3,0) circle (1pt) node [below=5pt] {$s$};
}
{
	\fill (4.5,0) circle (1pt) node [below=5pt] {$s+l$};

}
\draw[->] (0,0)--(0,2) node[anchor=north]{};
{
	\fill (0,1.2) circle (1pt) node [left=5pt] {1};
}
\draw[thick] [ black] (0,0)--(1.5, 0)--(3,1.2)--(4.5,1.2)--(5.5,1.2);
\draw [black](4,1.5) node{$f_{1}$};
\end{tikzpicture}
\end{center}
and set $f_2(s+x)=f_1(s-x)$ for all $x\in \mathbb{R}$.
We regard $f_1$ and $f_2$ as elements in $C([0,1])$ 
by restricting them to $[0,1]$.
For $j=1,2$, define a completely positive contractive map 
$\varphi_j \colon \D \to A$ by
\begin{equation}\label{rho'}\tag{4.2}
\varphi_j(d)=(1-f_j) \cdot  \psi_j(d)+f_j \cdot \widetilde{\psi_j}(d)\end{equation}
for all $d\in \D$.

\textbf{Claim 1:} \emph{$\varphi_j$ is $(F_\D, F_A, K_G,\ep_0+2\widetilde{\ep})$-regular for $I_j$}. 
We prove this for $j=1$, since the proof for $j=2$ is analogous.
Note that $\varphi_1$ agrees with $\psi_1$ on $[0,s-l]$, and 
with $\widetilde{\psi}_j$ on $[s,s+l]$.  Thus, it suffices
to prove regularity of $\varphi_1$ for $[s-l,s]$.
Conditions (U), (C) and (M) are explicitly verified in Lemma~4.4 
of~\cite{HirRorWin_algebras_2007}, so we only prove (E).
Given $d\in F_\D$, $g\in K_G$ and $x\in [s-l,s]$, we have
\begin{align*}\alpha_g^{(x)}(\varphi_1(d)_x)
 &= 
 (1-f_1(x))\underbrace{\alpha_g^{(x)}(\psi_1(d)_x)}_{\approx_{\ep_0} \
 \psi_1(\delta_g(d))_x}+f_1(x)\underbrace{\alpha_g^{(x)}(\widetilde{\psi}_1(d)_x)}_{\approx_{\ep_0+2\widetilde{\ep}} \ 
 \widetilde{\psi}_1(\delta_g(d))_x} \\
 &\approx_{\ep_0+2\widetilde{\ep}} \left((1-f_1(x))\psi_1(\delta_g(d))_x+f_1(x)\widetilde{\psi}_1(\delta_g(d))_x\right)\\
 &=\varphi_1(\delta_g(d))_x,
\end{align*}
where the bracketed approximations use condition (E)$_1$ and the 
fact that $\widetilde{\psi}_1$ is $(F_\D, F_A, K_G,
\varepsilon_{0}+2\widetilde{\varepsilon})$-regular 
for $[s-l,s+l]$, respectively. 
We deduce that $\varphi_1$ satisfies condition
(E), as desired. 
This proves the claim, and establishes condition (1) in the statement.

We proceed to the construction of the map $\mu_j$. 
Since 
\[\pi_s\circ\varphi_j=f_j(s)\pi_s\circ\widetilde{\psi}_j=\sigma\circ\pi_s\circ\psi_j,\]
it follows that 
the range of $\pi_s\circ\varphi_j$ is contained
in the range of $\sigma$, and thus by (5.a) we have 
\[\label{eqn:varphiscommute}\tag{4.3}
\varphi_j(d)_s\mu(d')=\mu(d')\varphi_j(d)_s
\]
for all $d,d'\in\D$. Thus,
there exists a completely positive contractive map 
$\overline{\mu}_j\colon \D  \otimes \D \to A_{s}$ which is
determined by
$\overline{\mu}_j(d\otimes d')= \varphi_j(d)_{s} \mu(d')$
for all $d,d'\in\D$. Let $\overline{\nu}_j\colon \D\to A_s$ be given
by $\overline{\nu}_j(d)=\overline{\mu}_j(1\otimes d)$ for all $d\in\D$.
Use the Choi-Effros lifting theorem to find 
completely positive contractive maps 
\[\mu_j\colon \D \otimes \D \to A \ \ \mbox{ and } \ \ \nu_j\colon \D\to A\]
satisfying $\pi_s\circ\mu_j=\overline{\mu}_j$ and 
$\pi_s\circ\nu_j=\overline{\nu}_j$. 

\textbf{Claim 2:} \emph{$\pi_s\circ \nu_j$ satisfies conditions (U), (C), (M), and (E) with respect to $(F_\D, \pi_s(F_A), K_G, 3\ep)$.}
As in Claim~1, conditions (U), (C) and (M) 
are checked in the proof of Lemma~4.4 in~\cite{HirRorWin_algebras_2007},
so we only prove (E).
By Claim~1, condition (U) for $\varphi_j$ gives 
$\varphi_j(1)_s\approx_{\ep_0+2\widetilde{\ep}}1$, Using this 
at the last step,
we get
\[\label{eqn:nujs}\tag{4.4}
\nu_j(d)_s=\overline{\nu}_j(d)=\overline{\mu}_j(1\otimes d)=\varphi_j(1)_{s} \mu(d)\approx_{\ep} \mu(d)\]
for all $d\in \D$. For $g\in K_G$ and $d\in F_\D$, we have
\begin{align*}
\alpha_g^{(s)}(\nu_j(d)_s)&\stackrel{(\ref{eqn:nujs})}
{\approx}_{\hspace{-.15cm}\ep}\alpha_g^{(s)}(\mu(d))\\
&\stackrel{\mathrm{(5.c)}}{\approx_{\widetilde{\ep}}} 
\mu(\delta_g(d))\\
&\stackrel{(\ref{eqn:nujs})}
{\approx}_{\hspace{-.15cm}\ep} \nu_j(\delta_g(d))_s.
\end{align*}
This proves the claim, and establishes condition (2) in the statement.


We check condition (3). Let $d,d'\in F_\D$, and recall that $F_\D$
was assumed to consist of contractions. Then
\begin{align*}
\varphi_j(d)_s\nu_j(d')_s&\stackrel{(\ref{eqn:nujs})}
{\approx}_{\hspace{-.15cm}\ep}
\varphi_j(d)_s\mu(d')\\
&\stackrel{(\ref{eqn:varphiscommute})}{=} \mu(d')\varphi_j(d)_s\\
&\stackrel{(\ref{eqn:nujs})}
{\approx}_{\hspace{-.15cm}\ep}\nu_j(d')_s\varphi_j(d)_s,
\end{align*}
as desired. To check Condition (4), let $d,d'\in F_\D$. Then
\begin{align*}
\mu_j(d\otimes d')_s&=\varphi_j(d)_s\mu(d') 
\stackrel{(\ref{eqn:nujs})}
{\approx}_{\hspace{-.15cm}\ep}=\varphi_j(d)_s\nu_j(d')_s,
\end{align*}
as desired.
We turn to condition (5), so fix $d,d'\in F_\D$, 
and $g\in K_G$. Then
\begin{align*}
\alpha_g^{(s)}(\mu_j(d\otimes d')_s)&=
\alpha_g^{(s)}(\mu(d')\varphi_j(d)_s)\\
&\stackrel{\mathrm{(5.c)}}{\approx_{\widetilde{\ep}}} 
\mu(\delta_g(d'))\alpha_{g}^{(s)}(\varphi_j(d)_s)\\
&\stackrel{\mathrm{Claim~\!1}}{\approx_{\ep_0+2\widetilde{\ep}}}
\mu(\delta_g(d'))\varphi_j(\delta_g(d))_s\\
&=\mu_j(\delta_g(d)\otimes\delta_g(d'))_s,
\end{align*}
as desired. To prove condition (6), let $d\in F_\D$. Then
\begin{align*}
\nu_1(d)_s&\stackrel{(\ref{eqn:nujs})}
{\approx}_{\hspace{-.15cm}\ep} \mu(d)\stackrel{(\ref{eqn:nujs})}
{\approx}_{\hspace{-.15cm}\ep} \nu_2(d)_s
\end{align*} 
as desired. 
This finishes the proof of the first part of the lemma.
  
The last assertion in the lemma is proved identically as above, noticing
that all the estimates we made at the point $\{s\}$ only depend on
the regularity of $\psi_1$ and $\psi_2$ at $\{s\}$.
We omit the details.\end{proof}
  
We will need the following notation, which is borrowed
from \cite{Sza_stronglyI_2018}.
  
\begin{nota}
Let $G$ be a second countable, locally compact group and 
let $(D,\delta)$ be a unital $G$-algebra. 
Following \cite{Sza_stronglyI_2018}, 
for a compact set $K \subseteq G$ and $\ep >0$, we set 
\[ 
D_{\ep, K}^{\delta} = \{d \in D \colon \|\delta_g(d)-d\| < \ep \mbox{ for  all } g \in K\}.
\]
and $\U(D_{\ep, K}^{\delta})= \U(D) \cap D_{\ep, K}^{\delta}$. 
Finally, we let $\U_{0}(D^{\delta}_{K, \ep})$ denote the 
connected component of $1_D$ in $\U(D_{\ep,K}^\delta)$.
\end{nota}

We now recall a technical condition for unital $G$-algebras called \emph{unitary 
regularity}; see Definition~2.17 in~\cite{Sza_stronglyII_2018}. For the trivial
$G$-action on $D$, the property reduces to the fact that the 
commutator subgroup of $\mathcal{U}(D)$ is contained in $\mathcal{U}_0(D)$. 

\begin{df}\label{df:UnitReg}
Let $G$ be a locally compact group, and 
let $(D,\delta)$ be a unital $G$-algebra. We say that $\delta$ is \emph{unitarily regular}, if for every compact set $K \subseteq G$ and $\varepsilon >0$, there exists $\gamma >0$ such that the commutator
$uvu^{*}v^{*}$ belongs to $\mathcal{U}_{0}(D_{\varepsilon, K} ^{\delta})$ for every $ u,v\in \mathcal{U}(D_{\gamma, K} ^{\delta})$.
\end{df}

If $G$ is compact, then the above definition reduces to the assertion that 
the commutator subgroup of $\mathcal{U}(D^\delta)$ is contained in 
$\mathcal{U}_0(D^\delta)$. 
Recall that a \ca\ $D$ is said to be \emph{$K_1$-injective} if the 
canonical map $\U(D)/\U_0(D)\to K_1(D)$ is injective. By Proposition~2.19
in~\cite{Sza_stronglyII_2018}, a unital $G$-algebra $(D,\delta)$ is
unitarily regular whenever the fixed point algebra of the sequence algebra 
$D_{\I}$ is
$K_1$-injective. Using this, it is shown in said 
proposition that any action absorbing
$\id_{\mathcal{Z}}$ tensorially is unitarily regular. 

In the equivariang setting, unitary regularity serves as a replacement of
$K_1$-injectivity for strongly self-absorbing $G$-actions. Even though 
$K_1$-injectivity for strongly self-absorbing \emph{$C^*$-algebras}
is automatic by the main result of \cite{Win_strongly_2011}, the analogous
result for strongly self-absorbing $G$-algebras remains open (although it 
has been confirmed when $G$ is amenable; see \cite{GarHir_strongly_2018}).

For a subset $F$ of a \ca\ and $n\in\N$, 
we let $F^n$ denote the set of all $n$-fold products of elements
in $F$.

\begin{lma}\label{key3}
Let $G$ be a second countable, locally compact group, 
let $(\D,\delta)$ be a strongly self-absorbing, unitarily regular 
$G$-algebra, let $F_\D\subseteq \D$ be a finite subset, let
$K_G\subseteq G$ be a compact subset,
and let $\ep>0$ be given.
Then there exist a finite set 
$F_\D'\supseteq F_\D$ and $0<\ep'<\ep$ with the following property:

Let $(A, \alpha)$ be a separable, 
unital $G$-$C([0,1])$-algebra with $(\D,\delta)$-stable fibers, let $r<s<t$ in $[0,1]$, let 
$F_A\subseteq A$ be a finite subset,
and let $\psi_1,\psi_2\colon \D\to A$ be completely positive contractive maps 
that are $(F_\D,F_A,K_G, \ep,F_\D',\ep')$-regular
on $[r,s]$ and $[s,t]$, respectively. Then
there is a completely positive contractive map
$\Psi\colon \D\to A$ which is 
$(F_\D,F_A,K_G,\ep,F_\D',\ep')$-regular for $[r,t]$.
\end{lma}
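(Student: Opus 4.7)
The plan is to reduce the problem to a local gluing at the single interface point $s$, using Lemma~\ref{key2} as a preparatory step, and then invoking unitary regularity of $(\D,\delta)$ to interpolate via a $G$-near-invariant unitary path. First, I select the parameters $F_\D'\supseteq F_\D$ and $0<\ep'<\ep$ as follows. Using that $(\D,\delta)$ is strongly self-absorbing, there is a unitary $w\in\U(\D\otimes\D)$ which is approximately $G$-invariant with respect to $\delta\otimes\delta$ on $K_G$ and which satisfies $w(d\otimes 1)w^*\approx 1\otimes d$ for $d$ in a prescribed finite set. Using unitary regularity, I choose $\gamma>0$ so that every commutator of two $\gamma$-near-invariant unitaries of $\D$ can be connected to $1_\D$ inside $\U_0(\D^\delta_{\ep/10,K_G})$ via a path of bounded length. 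Then $F_\D'$ is taken to contain $F_\D$ together with enough products, images under $\delta_g$ for a dense subset of $K_G$, and elements arising from $w$, and $\ep'$ is chosen much smaller than $\ep$, $\gamma$, and the approximation tolerance built into $w$.

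Second, I apply Lemma~\ref{key2} to $\psi_1,\psi_2$ with parameters $(F_\D,F_A,K_G,\ep,F_\D',\ep')$ to produce completely positive contractive maps $\varphi_1,\varphi_2,\nu_1,\nu_2,\mu_1,\mu_2$ satisfying conditions (1)--(6) at $\{s\}$, with the stronger $F_\D',\ep'$-bounds guaranteed by the ``moreover'' clause. In particular, $\varphi_j$ remains regular on $[r,s]$ and $[s,t]$ with the enhanced regularity at the endpoints, and at $s$ we have $\varphi_j(d)_s\nu_j(d')_s\approx\mu_j(d\otimes d')_s$ together with $\nu_1(d)_s\approx\nu_2(d)_s$ for $d,d'\in F_\D'$.

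Third, I transfer the unitary $w\in\D\otimes\D$ to $A$ via $\mu_j$. Because $\overline{\mu}_j=\pi_s\circ\mu_j$ is approximately multiplicative on the relevant elements (as a consequence of (4) and (5) in Lemma~\ref{key2}) and because $w$ approximately swaps $d\otimes 1$ and $1\otimes d$, the elements $\mu_j(w)\in A$ are near-unitaries that, after a small perturbation to actual unitaries $z_j\in\U(A)$, satisfy $z_j\varphi_j(d)_sz_j^*\approx\nu_j(d)_s$ and are approximately $G$-invariant on $K_G$. Combining with $\nu_1(d)_s\approx\nu_2(d)_s$ gives $z_1\varphi_1(d)_sz_1^*\approx z_2\varphi_2(d)_sz_2^*$, i.e.\ $(z_2^*z_1)\varphi_1(d)_s(z_2^*z_1)^*\approx \varphi_2(d)_s$. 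Invoking unitary regularity on the near-$G$-invariant unitary $z_j$ (inside a suitable corner of $A$ provided by $\mu_j$ and an approximate unit compatible with $F_A$), I produce continuous paths $u_1\colon[s-l,s]\to\U(A)$ with $u_1(s-l)=1_A$, $u_1(s)=z_1$, and $u_2\colon[s,s+l]\to\U(A)$ with $u_2(s)=z_2$, $u_2(s+l)=1_A$, each $u_j(x)$ being $(\ep,K_G)$-near-invariant for $\alpha$, and each $u_j(x)$ commuting with $F_A$ up to $\ep$ (using that the path may be chosen inside the corner where $\nu_j$ sits, which already commutes with $F_A$ by (C) of Lemma~\ref{key2}).

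Finally, define
\begin{equation*}
\Psi(d)_x=\begin{cases}\varphi_1(d)_x & x\in[r,s-l],\\ u_1(x)\varphi_1(d)_xu_1(x)^* & x\in[s-l,s],\\ u_2(x)\varphi_2(d)_xu_2(x)^* & x\in[s,s+l],\\ \varphi_2(d)_x & x\in[s+l,t],\end{cases}
\end{equation*}
after a final adjustment at $s$ to match the two pieces (absorbed into the paths by the triangle estimate above). Verifying that $\Psi$ is $(F_\D,F_A,K_G,\ep,F_\D',\ep')$-regular on $[r,t]$: conditions (U) and (M) are immediate since conjugation by unitaries preserves unitality and multiplicativity; (C) follows from the approximate commutation of $u_j(x)$ with $F_A$ and of $\varphi_j(d)_x$ with $F_A$; and (E) follows by combining (E) for $\varphi_j$ with the fact that $u_j(x)$ is $(\ep,K_G)$-near-$\alpha$-invariant. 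The main obstacle is precisely the construction of the paths $u_j$ whose members are simultaneously near-$G$-invariant throughout the interval: this is the step that fails without unitary regularity, which is why $(\D,\delta)$ being unitarily regular is indispensable. The parameters $F_\D'$ and $\ep'$ have to be chosen so that the unitary $w$, the intertwiner $z_j$, and the unitary path produced by unitary regularity all satisfy tolerances tight enough that, when propagated through conjugation and composition, the resulting errors remain bounded by $\ep$ (and by $\ep'$ on a neighborhood of the endpoints of $[r,t]$).
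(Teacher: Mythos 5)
Your overall strategy matches the paper's: reduce to a single gluing point via Lemma~\ref{key2}, and use strong self-absorption together with unitary regularity to produce a ``flip''-implementing unitary path that interpolates between the two legs. But the execution diverges at the crucial step, and the divergence introduces a genuine gap.

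The problem is where the unitary path lives. Unitary regularity is a hypothesis on $(\D,\delta)$, and the paper uses it (via Lemma~3.10 of Szabo's second paper) to produce a continuous path $u\colon[0,1]\to\U(\D\otimes\D)$ from $1$ to a unitary approximately conjugating $d\otimes 1$ to $1\otimes d$, with every $u_x$ approximately $(\delta\otimes\delta)$-invariant. That path is then \emph{never} transported to a unitary path in $A$: instead one conjugates \emph{upstairs}, forming $d\mapsto v_j(x)(1\otimes d\otimes 1)v_j(x)^*$ in $C([0,1])\otimes\D\otimes\D$, and only afterwards applies the cpc map $\sigma_j$ built from $\mu_j$. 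Your proposal instead pushes $w$ forward to $\mu_j(w)$, perturbs it to a genuine unitary $z_j\in\U(A)$, and then asks for a continuous path in $\U(A)$ from $1_A$ to $z_j$ whose members are simultaneously near-$\alpha$-invariant on $K_G$ and nearly commute with $F_A$. Nothing in the hypotheses gives you this: $A$ is not assumed unitarily regular, and the ``corner of $A$ provided by $\mu_j$'' does not exist as an honest subalgebra, since $\mu_j$ is only a completely positive contraction that is approximately multiplicative on a finite set; its image is not closed under multiplication and carries no unitary group in which to run a regularity argument. Pushing the path $u_x$ forward pointwise does not help either, because the image of a unitary under a cpc map need not be close to a unitary uniformly along the path unless you control approximate multiplicativity on a norm-dense net of the path --- and even then you only get a path of \emph{near}-unitaries, whose continuous correction to exact unitaries is an additional unproved step. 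The paper's device of keeping $\Ad(v_j(x))$ in $\D\otimes\D$ sidesteps all of this.

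Two smaller issues: (i) your piecewise formula for $\Psi$ only matches approximately at $s$, so $\Psi$ is not well defined as a map into the $C([0,1])$-algebra; the paper resolves this by using a partition of unity $f_1+h_1+h_2+f_2=1$ and convex combinations rather than a hard splice, and your ``final adjustment absorbed into the paths'' would need to be made precise in essentially this way. (ii) The choice of $F_\D'$ and $\ep'$ must be made \emph{before} $A$ and $\psi_1,\psi_2$ are given (the lemma quantifies over all such $A$), so $F_\D'$ can only involve data from $\D$, $K_G$ and $\ep$ --- in the paper it is built from an $\ep/9$-net of elementary-tensor approximants $y_k$ to the path $u$, with $\ep'=\ep/144m^4$; your description of $F_\D'$ gestures at this but would need the net to make the later multiplicativity estimates quantitative.
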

\begin{proof} Without loss of generality, we assume that 
$F_\D$ contains $1_\D$ and that $K_G$ contains $1_G$.
Set $\widetilde{F_\D}= \bigcup_{g\in K_G}\delta_g(F_\D\cdot F_\D)$, 
which is a compact subset of $\D$.
Using that $(\D,\delta)$ is unitarily regular and self-absorbing,
we apply Lemma~3.10 of~\cite{Sza_stronglyII_2018} to find 
a continuous unitary path  
$u \colon [0,1]\to \U(\D\otimes \D)$ with $u_{0}=1_{\D \otimes \D}$ satisfying
$\max_{g\in K_G}\|(\delta\otimes\delta)_g(u_x)-u_x\|<\ep/9$ for all $x\in [0,1]$ 
and such that
\begin{equation}\label{eqn:ufliposta}\tag{4.5}
\|u_{1}(d \otimes 1)u_{1}^{*}- 1 \otimes d\| < \frac{\ep}{9}
\end{equation}
for all $x\in [0,1]$ and all $d \in \widetilde{F_\D}$. 
Using that $(u_{x})_{x \in [0,1]}$ is norm-compact, find 
$m\in \mathbb{N}$ and $v_{k,\ell}, w_{k,\ell}\in \D$, for 
$k,\ell=1,\dots,m$, such that, with $y_{k}= \sum _{\ell=1} ^{m}  v_{k,\ell}\otimes w_{k,\ell}$, for any $x \in [0,1]$ there is $k\in\{1,\ldots,m\}$ with
$\|u_x-y_k\|<\ep/9$. 
Without loss of generality, we may assume that $\|y_k\|\leq 1$.
Note that 
\begin{equation}\label{eqn:u1flip}\tag{4.6}
\max_{g\in K_G}\|(\delta\otimes\delta)_g(y_k)-y_k\|<\frac{2\ep}{9}
\end{equation}
for all $k=1,\ldots,m$. Set $\ep'= \ep/144m^{4}$.
With $V=\{v_{k,\ell}, w_{k,\ell}\colon k,\ell =1,\dots m\}$, set 
\[
F_\D'=\bigcup_{g\in K_G}\delta_g((F_\D\cup V\cup V^*)^6).\]
Then $F_\D\subseteq \widetilde{F_\D}\subseteq F_\D'$.

Let $(A, \alpha)$ be a separable, 
unital $G$-$C([0,1])$-algebra with $(\D,\delta)$-stable fibers, let $r<s<t$ in $[0,1]$, let 
$F_A\subseteq A$ be a finite subset,
and let $\psi_1,\psi_2\colon \D\to A$ be completely positive contractive maps 
that are $(F_\D,F_A,K_G, \ep,F_\D',\ep')$-regular
on $[r,s]$ and $[s,t]$, respectively. 
Without loss of generality, we assume that $F_A$ contains
the unit of $A$.
For $j=1,2$, let $\varphi_j,\nu_j\colon \D\to A$ and $\mu_j\colon
\D\otimes\D\to A$ be completely positive contractive maps
as in the conclusion of \autoref{key2}.
Using upper semicontinuity of the norm,
and since $\varphi_1$ and $\varphi_2$ 
are $(F_\D',F_A, K_G,\ep')$-regular for $\{s\}$,
find $l>0$ such that, with $I=[s-3l,s+3l]$, 
we have $I\subseteq [s,t]$, the maps
$\varphi_1, \varphi_2 $ are $(F_\D',F_A, K_G,\ep')$-regular for $I$
and 
conditions (3) through (6) in \autoref{key2}
hold not just for $s$, but for all $x\in I$.
In particular, condition (5) 
in \autoref{key2} gives
\begin{align*}\label{eqn:mujEquiv}\tag{4.7}
\max_{g\in K_G}\|\alpha^{(x)}_g(\mu_j(d\otimes d')_x)-
\mu_j((\delta\otimes\delta)_g(d\otimes d'))_x\|<\ep'
\end{align*}
for all $x\in I$ and all $d,d'\in F_\D'$.
For $j=1,2$, define a completely positive contractive map 
$\sigma_j \colon C([0,1] )\otimes \D \otimes \D\to A$ by
\[
\sigma_j(f \otimes d \otimes d')=f \cdot \mu_j(d \otimes d')
\]
for all $f\in C([0,1])$ and $d,d'\in\D$. 
It is then immediate to check that
\begin{align}\label{eqn:sigmaCXmap}\tag{4.8}
\sigma_j(\eta)_x=\mu_j(\eta(x))_x 
\end{align}
for all $\eta\in C([0,1])\otimes \D\otimes\D$ and all $x\in [0,1]$.
Let $f_{1}, h_{1}\colon \mathbb{R}\to [0,1]$ be given by the following
graphs:
\begin{center}
\begin{tikzpicture}
\draw[->] (0,0)--(8,0) node[anchor=east]{};
{
	\fill (0,0) circle (1pt) node [below=5pt] {0};
}
{
	\fill (1.5,0) circle (1pt) node [below=5pt] {$s-3l$};
}
{
	\fill (3,0) circle (1pt) node [below=5pt] {$s-2l$};
}
{
	\fill (4.5,0) circle (1pt) node [below=5pt] {$s-l$};
}
{
	\fill (6,0) circle (1pt) node [below=5pt] {$s$};
}
{
	\fill (7.5,0) circle (1pt) node [below=5pt] {$s+l$};
}
\draw[->] (0,0)--(0,3) node[anchor=north]{};
{
	\fill (0,1.5) circle (1pt) node [left=5pt] {1};
}
\draw [blue](0,1.5)--(1.5,1.5)--(3,0)--(4.5,0)--(6,0)--(7.5,0);
\draw[blue] (1, 2) node{$f_{1}$};
\draw[red] (0,0)--(1.5,0)--(3,1.5)--(4.5,1.5)--(6,1.5)--(7.5,0)--(8,0);
\draw [red](4.5,2) node{$h_1$};
\end{tikzpicture}
\end{center}

Set $f_2(s+x)=f_1(s-x)$ and $h_2(s+x)=h_1(s-x)$ for all 
$x\in \mathbb{R}$.
Note that $f_1+h_2+h_2+f_2=1$ and that 
$f_1(f_2+h_2)=0=f_2(f_1+h_1)$. 
Let $\xi_1 \colon \mathbb{R}\to [0,1]$ be given by
\begin{center}
\begin{tikzpicture}
\draw[->] (0,0)--(8,0) node[anchor=east]{};
{
	\fill (0,0) circle (1pt) node [below=5pt] {0};
}
{
	\fill (1.5,0) circle (1pt) node [below=5pt] {$s-3l$};
}
{
	\fill (3,0) circle (1pt) node [below=5pt] {$s-2l$};
}
{
	\fill (4.5,0) circle (1pt) node [below=5pt] {$s-l$};

}
{
	\fill (6,0) circle (1pt) node [below=5pt] {$s$};
}
{
	\fill (7.5,0) circle (1pt) node [below=5pt] {$s+l$};
}
\draw[->] (0,0)--(0,3) node[anchor=north]{};
{
	\fill (0,1.5) circle (1pt) node [left=5pt] {1};
}
\draw [black](0,0)--(1.5,0)--(3,0)--(4.5,1.5)--(6,1.5)--(7.5,1.5);
\draw[black] (6, 2) node{$\xi_1$};
\end{tikzpicture}
\end{center}
and set $\xi_2(s+x)=\xi_1(s-x)$ for all $x\in \mathbb{R}$.
We regard $f_1, f_2, h_1,h_2, \xi_1$ and $\xi_2$
as elements in $C([0,1])$ by restricting them to $[0,1]$.
For $j=1,2$, set 
\[v_j=u\circ \xi_j \colon [0,1]\to \U(\D \otimes \D),\] 
and let $\theta_j \colon \D\to A$ be the completely positive contractive map defined by
\[
\theta_j(d)=(\sigma_j \circ \Ad(v_j))(1_{C([0,1])} \otimes d \otimes 1)
\]
for all $d\in\D$. 
Finally, define $\Psi \colon \D\to A$ by
\[
\Psi(d)= f_1\varphi_1(d)+h_1\theta_1(d)+h_2\theta_2(d)+f_2\varphi_2(d)
\]
for all $d\in\D$. We claim that $\Psi$ is $(F_\D, F_A, K_G,\ep,F_\D',\ep')$-regular for $[r,t]$. 

Observe that $\Psi(d)_{[0,s-3l]}=\varphi_1(d)_{[0,s-3l]}$ and $\Psi(d)_{[s+3l,1]}=\varphi_2(d)_{[s+3l,1]}$. 
Since $\varphi_1$ and $\varphi_2$ are $(F_\D, F_A, K_G,\ep,F_\D',\ep')$-regular for $[r,s]$ and $[s,t]$, respectively, it suffices to prove that $\Psi$ is 
$(F_\D, F_A,K_G,\ep)$-regular for 
$[s-3l, s+3l]$.

The proof of Lemma~4.5 in~\cite{HirRorWin_algebras_2007}
shows that $\Psi$ satisfies 
conditions (U), (C), and (M) with respect to
$(F_\D,F_A,\ep)$ for $[s-3l,s+3l]$. Thus, it suffices to check
the following condition for all $d\in F_\D$ and 
for all $x\in [s-3l,s+3l]$:
\[\tag*{(E)$_x$}\max\limits_{g\in K_G}\|\alpha_g^{(x)}(\Psi(d)_x)-\Psi(\delta_g(d))_x\|<\ep.\]

We divide the proof into the following cases:

\underline{Case I:} $x \in [s-3l,s-2l]$. Then $v_1(x)=1$, and thus 
$\theta_1(d)_{x}=\varphi_1(d)_{x}$ for all $d\in\D$. 
It follows that 
$\Psi(d)_x=f_1(x)\varphi_1(d)_x+h_1(x)\varphi_1(d)_x=\varphi_1(d)_x$,
as $f_1+h_1=1$ on $[s-3l,s-2l]$. Since $\varphi_1$ satisfies
condition (E)$_x$ 
above, the result follows in this case.

\underline{Case II:}
$x \in [s-2l, s-l]$. Then $f_1(x)=f_2(x)=h_2(x)=0$ and $h_1(x)=1$.
Hence $\Psi(d)_x=\theta_1(d)_x$ for all $d\in\D$.
Find $k\in\{1,\ldots,m\}$ with $\|v_1(x)-y_k\|<\ep/9$, and 
note that $y_k(d\otimes 1)y_k^*$ belongs to $F_\D'\otimes F_\D'$
for all $d\in F_\D$ (recall that $F_\D$ contains $1_\D$).
For $d\in \D$, it follows that
\begin{align*}\label{eqn:CaseII}\tag{4.9}
\Psi(d)_x&=
\sigma_1(v_1(1\otimes d\otimes 1)v_1^*)_x\\
&\stackrel{(\ref{eqn:sigmaCXmap})}{=}
\mu_1(v_1(x)(d\otimes 1)v_1(x)^*)_x\\
&\approx_{\frac{2\ep}{9}}
\mu_1(y_k(d\otimes 1)y_k^*)_x
\end{align*}

For $g\in K_G$ and $d\in F_\D$,
we get
\begin{align*}
\alpha_g^{(x)}(\Psi(d)_x)
&\stackrel{(\ref{eqn:CaseII})}{\approx_{\frac{2\ep}{9}}}
\alpha_g^{(x)}(\mu_1(y_k(d\otimes 1)y_k^*)_x)\\
&\stackrel{(\ref{eqn:mujEquiv})}{\approx_{\ep'}}
\mu_1\big((\delta\otimes\delta)_g(y_k(d\otimes 1)y_k^*)\big)_x\\
&\stackrel{(\ref{eqn:u1flip})}{\approx_{\frac{4\ep}{9}}}
\mu_1\big(y_k(\delta_g(d)\otimes 1)y_k^*\big)_x
\stackrel{(\ref{eqn:CaseII})}{\approx_{\frac{2\ep}{9}}}
\Psi(\delta_g(d))_x.
\end{align*}
It follows that 
$\|\alpha_g^{(x)}(\Psi(d)_x)-\Psi(\delta_g(d))_x\|<\ep$,
thus establishing (E)$_x$, as desired. 

\underline{Case III:} $x \in [s-l, s+l]$.
Note that $f_1(x)=f_2(x)=0$ 
and $v_1(x)=v_2(x)=u_{1}$.
Using this at the first step and 
third steps, respectively, 
for $d\in \widetilde{F_\D}$, it follows that
\begin{align*}\label{eqn:CaseIII}\tag{4.10}
\Psi(d)_x&=
\sum_{j=1}^2 h_j(x)\sigma_j(v_j(1\otimes d\otimes 1)v_j^*)_x\\
&\stackrel{(\ref{eqn:sigmaCXmap})}{=}
\sum_{j=1}^2 h_j(x)\mu_j(v_j(x)(d\otimes 1)v_j(x)^*)_x\\
&=
\sum_{j=1}^2 h_j(x)\mu_j(u_1(d\otimes 1)u_1^*)_x\\
&\stackrel{(\ref{eqn:ufliposta})}{\approx_{\frac{\ep}{9}}}
\sum_{j=1}^2 h_j(x)\mu_j(1\otimes d)_x
\end{align*}
For $g\in K_G$ and $d\in F_\D$, and using 
at the last step that
$\delta_g(d)$ is in $\widetilde{F_\D}$, we get
\begin{align*}
\alpha_g^{(x)}(\Psi(d)_x)&\stackrel{(\ref{eqn:CaseIII})}{\approx_{\frac{\ep}{9}}}
\sum_{j=1}^2 h_j(x)\alpha_g^{(x)}(\mu_j(1\otimes d)_x)\\
&\stackrel{(\ref{eqn:mujEquiv})}{\approx_{\ep'}}
\sum_{j=1}^2 h_j(x)\mu_j(1\otimes \delta_g(d))_x
\stackrel{(\ref{eqn:CaseIII})}{\approx_{\frac{\ep}{9}}} \Psi(\delta_g(d))_x.
\end{align*}
This proves (E)$_x$,
as desired.

\underline{Case IV:} $x \in [s+l, s+2l]$. This is proved identically to Case II, by exchanging the subscripts $1$ and $2$ everywhere.

\underline{Case V:} $x \in [s+2l, s+3l]$. This is proved identically to Case I, since $\Psi(d)_x=\varphi_2(d)_x$ for all $d\in\D$. 
\end{proof}

We are now ready to prove the main result of this section, 
which is an equivariant version of Theorem~4.6 in~\cite{HirRorWin_algebras_2007}. As mentioned in the introduction, 
particular cases of it were proved in Theorem~4.28 
of~\cite{GarLup_applications_2019} and Corollary~3.7 
of~\cite{Sza_rokhlin_2019}.

\begin{thm}\label{thm:FiberToBundleHRW}
Let $G$ be a second countable, 
locally compact group, let $(\D,\delta)$ be a 
unitarily regular strongly self-absorbing $G$-algebra,
let $X$ be a second countable locally compact Hausdorff space, 
and let $(A, \alpha)$ be a separable, unital $G$-$C_0(X)$-algebra.
Assume that the covering dimension of $X$ is finite. Then $(A,\alpha)$ 
is $(\D, \delta)$-stable if and only if $(A_{x}, \alpha^{(x)})$ is $(\D, \delta)$-stable for all 
$x \in X$.
\end{thm}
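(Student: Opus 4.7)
The forward direction is immediate from Corollary~2.8 of \cite{Sza_stronglyII_2018}. For the converse, the plan is to verify condition~(3) of \autoref{thm:CharactDabs}: given finite subsets $F_\D \subseteq \D$ and $F_A \subseteq A$, a compact subset $K_G \subseteq G$, and $\ep > 0$, I produce a completely positive contractive map $\psi \colon \D \to A$ that is $(F_\D, F_A, K_G, \ep)$-regular for all of $X$, which by \autoref{rem:SufficesGoodAllX} is exactly the needed property. Since $A$ is unital, non-degeneracy of the structure map $C_0(X)\to Z(M(A))$ forces $X$ to be compact, so I assume this.

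Since $\dim_{\mathrm{cov}}(X) < \infty$, the Hurewicz--Wallman embedding theorem produces a homeomorphic embedding $X \hookrightarrow [0,1]^m$ for some $m \in \N$. Pulling back along the quotient $C([0,1]^m) \twoheadrightarrow C(X)$ turns $A$ into a (not necessarily continuous) $G$-$C([0,1]^m)$-algebra whose fibers at points of $X$ are unchanged and whose fibers at points of $[0,1]^m \setminus X$ are zero; in particular, all $[0,1]^m$-fibers remain $(\D,\delta)$-stable. It therefore suffices to prove the statement for $G$-$C([0,1]^m)$-algebras with $(\D,\delta)$-stable fibers, which I do by induction on $m$.

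For the base case $m = 1$, given the tuple $(F_\D, F_A, K_G, \ep)$, I first invoke \autoref{key3} to obtain a finite set $F_\D' \supseteq F_\D$ and $0 < \ep' < \ep$ governing the gluing step via the unitary regularity of $(\D,\delta)$. Then I apply \autoref{key1} with the tighter tuple $(F_\D', F_A, K_G, \ep')$ to obtain a partition $0 = t_0 < t_1 < \cdots < t_n = 1$ and completely positive contractive maps $\psi_1, \ldots, \psi_n \colon \D \to A$ such that each $\psi_j$ is $(F_\D', F_A, K_G, \ep')$-regular on $[t_{j-1}, t_j]$; in particular, each $\psi_j$ is $(F_\D, F_A, K_G, \ep, F_\D', \ep')$-regular there in the sense of \autoref{df:better}. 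I then iterate \autoref{key3}: combining $\psi_1$ and $\psi_2$ at $t_1$ produces a map that is $(F_\D, F_A, K_G, \ep, F_\D', \ep')$-regular on $[t_0, t_2]$; combining this with $\psi_3$ at $t_2$ extends the regularity to $[t_0, t_3]$; and so on. After $n-1$ gluings I arrive at a map that is $(F_\D, F_A, K_G, \ep)$-regular on all of $[0,1]$, completing the base case. It is crucial here that the doubled regularity of \autoref{df:better} is preserved by each gluing step, which is built into the statement of \autoref{key3} and is precisely what makes the telescoping iteration go through.

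For the inductive step, write $[0,1]^{m+1} = [0,1]^m \times [0,1]$ and regard a $G$-$C([0,1]^{m+1})$-algebra $A$ as a $G$-$C([0,1])$-algebra whose fiber at $t \in [0,1]$ is the $G$-$C([0,1]^m)$-algebra $A_{[0,1]^m \times \{t\}}$. The fibers of $A_{[0,1]^m \times \{t\}}$ agree with the fibers of $A$ over the slice $[0,1]^m \times \{t\}$ and are thus $(\D,\delta)$-stable; by the inductive hypothesis each slice algebra $A_{[0,1]^m \times \{t\}}$ is itself $(\D,\delta)$-stable, so the base case applied to $A$ viewed as a $G$-$C([0,1])$-algebra completes the induction. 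The main obstacle throughout is the base case: threading the equivariance and the implicit continuity conditions through the telescoping gluing, which is handled by the careful $(F_\D', \ep')$-bookkeeping in \autoref{df:better} together with the unitary-regularity input to \autoref{key3}.
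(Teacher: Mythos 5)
Your proof follows essentially the same route as the paper's: reduce to compact $X$, embed $X$ into a cube $[0,1]^m$ via Hurewicz--Wallman, induct on $m$ by viewing the cube algebra as a $G$-$C([0,1])$-algebra whose fibers are slice algebras, and settle the one-dimensional case by feeding the output of \autoref{key1} (run at the tighter tolerance $(F_\D',\ep')$ supplied by \autoref{key3}) into a telescoping sequence of applications of \autoref{key3}. The only genuine flaw is your justification for the reduction to compact $X$: non-degeneracy of the structure map $C_0(X)\to Z(M(A))$ together with unitality of $A$ does \emph{not} force $X$ to be compact --- take $A=\C$, $X=\R$, and $\rho=\mathrm{ev}_0$, which is a non-degenerate unital $C_0(\R)$-algebra over a non-compact base. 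What unitality does give is that the support $\{x\in X\colon A_x\neq 0\}$ is contained in a compact subset $Y\subseteq X$ (approximate $1_A$ by $\sum_i f_i\cdot a_i$ and note that $\max_i|f_i|$ is bounded below on the support), after which one may replace $X$ by $Y$ since $\dim_{\mathrm{cov}}(Y)\le\dim_{\mathrm{cov}}(X)$ and the nonzero fibers are unchanged; the paper instead exhausts $X$ by compact sets and invokes permanence of $(\D,\delta)$-stability under equivariant direct limits (Theorem~2.10 of \cite{Sza_stronglyII_2018}). Either repair is routine, and the rest of your argument coincides with the paper's.
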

\begin{proof}
The ``only if'' implication follows from Corollary~2.8 
in~\cite{Sza_stronglyII_2018} (regardless of the covering
dimension of $X$.)
We prove the ``if'' implication. 
Since $X$ is second countable
and locally compact, it follows that it is $\sigma$-compact.
Let $(U_n)_{n\in\N}$ be an increasing sequence of open subsets
of $X$ with compact closures $X_n=\overline{U_n}$, such that
$\bigcup_{n\in\N}U_n=X$. 
Set $A_n=C_0(U_n)A$, which is a $G$-invariant ideal in $A$. 
Denote by $\alpha^{(n)}$ the induced action on $A_n$.
Then $\varinjlim (A_n,\alpha^{(n)})\cong (A,\alpha)$, 
and $\pi_{X_n}$ restricts to an equivariant
isomorphism between $A_n$ and a 
$G$-invariant ideal in the $C(X_n)$-algebra $A_{X_n}$.
Since $(A_{X_n},\alpha^{(X_n)})$ is $(\D,\delta)$-absorbing 
by Corollary~2.8 in~\cite{Sza_stronglyII_2018}, and since 
direct limits of $(\D,\delta)$-absorbing $G$-algebras are
again $(\D,\delta)$-absorbing 
by Theorem~2.10 in~\cite{Sza_stronglyII_2018}, 
it suffices to prove
the statement for $X_n$ in place of $X$ and 
$(A_{X_n},\alpha^{X_n})$ in place of $(A,\alpha)$. 
Alternatively, and this is what we shall do, we may 
assume that $X$ is compact (in addition to finite-dimensional).

Using that $X$ is compact and finite-dimensional, 
together with comments preceding \autoref{key1}, we may assume that 
there is $m\in\N$ with $X=[0,1]^m$. We prove the result by
induction on $m$.

Since the case $m=0$ is trivial, let us assume that the result
holds for $m-1$. and prove it for $m$. 
Let $[0,1]^m\to [0,1]$ be the canonical projection onto the first
coordinate, and use this map to 
regard $(A, \alpha)$ as a $G$-$C([0,1])$-algebra in
such a way that the fiber over $x\in [0,1]$ is 
$A_{\{x\}\times [0,1]^{m-1}}$. By the inductive assumption, these
fibers are $(\D,\delta)$-stable. In other words, in order to 
establish the inductive step, it suffices to
prove that a $G$-$C([0,1])$-algebra is $(\D,\delta)$-stable
whenever its fibers are. Thus, we assume from now on that 
$X=[0,1]$. 

Fix finite subsets $F_\D\subseteq \D$ and $F_A\subseteq A$,
a compact subset $K_G\subseteq G$,
and tolerance $\ep>0$. 
We will produce a completely positive contractive
map $\psi\colon \D\to A$ which is
$(F_\D,F_A,K_G,\ep)$-regular for all of $[0,1]$. 
Once we do this, the result will follow
from \autoref{thm:CharactDabs} (see \autoref{rem:SufficesGoodAllX}).

Let $F_\D'\supseteq F_\D$,
and $\ep'<\ep$ be as in the 
conclusion of \autoref{key3} for $(F_\D,K_G,\ep)$.
Apply \autoref{key1} to the tuple 
$(F_\D',F_A,K_G,\ep')$
to find $n \in \mathbb{N}$, points 
$0=t_{0}<t_{1}<\dots<t_{n}=1$, and 
completely positive contractive maps $\psi_{j} \colon \D\to A$, for $j=1,\ldots,n$, such that $\psi_{j}$ is 
$(F_\D',F_A,K_G,\varepsilon')$-regular for $[t_{j-1},t_{j}]$. 

Apply \autoref{key3} to $\psi_1$ and $\psi_2$ 
to find a completely positive contractive map 
$\Psi_{2}\colon \D\to A$ which is 
$(F_\D, F_A, K_G,\varepsilon,F_\D', \varepsilon')$-regular for $[0, t_{2}]$. 
Applying \autoref{key3} to $\Psi_2$ and $\psi_3$, 
find a completely positive contractive map 
$\Psi_{3}\colon \D\to A$ which is 
$(F_\D, F_A, K_G,\varepsilon,F_\D', \varepsilon')$-regular 
for $[t_{0}, t_{3}]$.
Repeating this procedure, we arrive at a 
completely positive map $\Psi_n\colon \D\to A$
which is 
$(F_\D, F_A, K_G,\varepsilon, F_\D', \varepsilon')$-regular 
for $[0, 1]$. Thus $\Psi_n$ satisfies conditions (U), (C), (M), and (E)
from \autoref{nota:Abbreviations} with respect to $(F_\D,F_A,K_G,\ep)$, as
desired. This concludes the proof.
\end{proof}


The argument used in Proposition~4.11 of~\cite{HirRorWin_algebras_2007} 
can be adapted to our setting to show that 
for general $X$, a $G$-$C_0(X)$-algebra
$(A,\alpha)$ absorbs a unitarily regular strongly self-absorbing
$G$-algebra $(\D,\delta)$ if and only if it is 
\emph{locally} $(\D,\delta)$-absorbing (meaning that 
for every $x\in X$ there is a compact neighborhood $Y_x$ of it such that the quotient $G$-algebra
$(A_{Y_x},\alpha^{Y_x})$ is $(\D,\delta)$-stable). The non-trivial implication
follows from the fact that, under these assumptions, 
one can show along the lines of Proposition~4.9 in
\cite{HirRorWin_algebras_2007} that $(A,\alpha)$ is an iterated 
equivariant pullback of $(\D,\delta)$-stable 
$G$-algebras, and said pullbacks are $(\D,\delta)$-stable by 
Corollary~2.8 and Theorem~5.9 of~\cite{Sza_stronglyII_2018} (for which 
unitary regularity is necessary). 
This applies, in particular,
whenever $(A,\alpha)$ is equivariantly locally trivial, 
in the natural sense.

\end{document}